\documentclass{amsart}
\usepackage{amssymb}
\usepackage{scalerel}
\usepackage{comment}
\usepackage{xcolor}
\usepackage[colorlinks=true,linkcolor=blue,allcolors=blue]{hyperref}

\newtheorem{thm}{Theorem}[section]
\newtheorem{prop}[thm]{Proposition}
\newtheorem{lem}[thm]{Lemma}

\providecommand{\keywords}[1]{%
  \small
  \textbf{\textit{Keywords---}} #1
}

\theoremstyle{definition}
\newtheorem{definition}[thm]{Definition}

\theoremstyle{remark}
\newtheorem{remark}[thm]{Remark}
\numberwithin{equation}{section}

\newcommand{\ip}[2]{\left\langle {#1} , {#2} \right\rangle} 
\newcommand{\norm}[1]{\left\lVert #1 \right\rVert} 
\newcommand{\R}{\mathbb{R}} 
\newcommand{\C}{\mathbb{C}} 
\newcommand{\D}{\mathbb{D}} 
\newcommand{\T}{\mathbb{T}} 

\begin{document}

\title[Nonexistence of Dual Frames]{Nonexistence of Approximately Dual Frames via Weighted Composition Operators}

\author{Trevor Camper}
\address{Department of Mathematics, Dartmouth College,
Hanover, NH, USA, 03755}
\email{trevor.camper@dartmouth.edu}

\author{Dongwei Chen}
\address{Department of Mathematics and Statistical Science, University of Idaho, Moscow, ID, USA, 83844}
\email{dongweic@uidaho.edu}

\author{Shuang Guan}
\address{School of Mathematics, Georgia Institute of Technology, Atlanta, GA, USA, 30332}
\email{sguan7@gatech.edu}

\begin{abstract}
    Reconstruction from the coefficients of a frame often requires a dual frame, and the canonical dual is obtained by inverting the frame operator, which is rarely available in a closed form.  It is therefore natural to ask whether the duality can instead be implemented by an operator of prescribed form. We study this problem in the context of weighted composition operators on Hardy and Bargmann-Fock spaces. We first characterize those weighted composition operators which carry a frame together with a dual of it to another such pair.  We then prove that if the image of a frame under a weighted composition operator is itself a dual frame, then such an operator is a positive scalar multiple of the identity and the frame is tight on both Hardy and Bargmann-Fock spaces. Relaxing duality to approximate duality, we show that on the Hardy space, composition is still forced to be the identity under mild regularity for the weight function. Approximate tightness, however, fails completely.  On the Bargmann-Fock space, the situation is rigid: such an approximate dual exists precisely when the ratio of the optimal frame bounds does not exceed an explicit sharp threshold determined by the prescribed error.
\end{abstract}

\subjclass[2020]{Primary 42C15; Secondary 47B33}

\keywords{dual frames; approximately dual frames; weighted composition operators; Hardy space; Bargmann-Fock space; reproducing kernels}

\maketitle

\section{Introduction}

Initiated by the seminal study of Duffin and Schaeffer \cite{duffin1952class}, frame theory was introduced in the course of studying nonharmonic Fourier series. A sequence $\{ f_n\}$ in a separable Hilbert space $\mathcal{H}$ is a \emph{frame} if there are constants $0< A \leq B <\infty$ such that
\begin{equation*}
    A ||f||^2 \leq \sum_{n=1}^\infty | \langle f ,f_n \rangle |^2 \leq B||f||^2 \quad \text{for every }\; f \in \mathcal{H}.
\end{equation*}
The two inequalities say precisely that the map $f \mapsto \{ \langle f , f_n \rangle\}$ is bounded from above and below, so that $f$ is determined by those coefficients in a numerically stable way. Since then, frames have been applied in many areas, such as the Kadison-Singer problem \cite{casazza2013kadison},  time-frequency analysis \cite{grochenig2001foundations}, wavelet analysis \cite{daubechies1992ten}, compressed sensing~\cite{naidu2020construction}, image processing~\cite{kutyniok2012multiscale},  and the design of filter banks~\cite{fickus2013finite}.  
What distinguishes a frame from an orthonormal basis in $\mathcal{H}$ is that linear independence may not be required: a frame can be redundant. That redundancy is not a defect but the point of the theory, as it ensures the robustness of signal reconstruction to noise.

Given a frame $\{ f_n\}$ in $\mathcal{H}$, the signal reconstruction is determined by the associated frame operator
$S:\mathcal{H}\to\mathcal{H}$ given by $    Sf=\sum\limits_{n=1}^{\infty}\ip{f}{f_{n}}f_{n}, \ f \in \mathcal{H}$,
and its dual frames. The best case is the \emph{tight frame} with frame bound $A>0$: the frame operator $S = A I$ where $I$ is the identity operator in $\mathcal{H}$ and the signal reconstruction is given by $f = \frac{1}{A}Sf = \frac{1}{A}\sum\limits_{n=1}^{\infty}\ip{f}{f_{n}}f_{n}, \ f \in \mathcal{H}$. For the dual frame reconstruction, recall that a sequence  $\{g_{n}\} \subset \mathcal{H}$ is called a dual frame of $\{f_{n}\}$ if for each $h\in \mathcal{H}$,
    \begin{align*}
        h=\sum_{n=1}^{\infty}\ip{h}{f_{n}}g_{n}=\sum_{n=1}^{\infty}\ip{h}{g_{n}}f_{n}.
    \end{align*} 
Note that the frame $\{ f_n\}$ has at least one dual frame $\{ S^{-1} f_n\}$ obtained by computing the inverse of the frame operator $S$, which is called the \emph{canonical dual frame},  but that inverse is computationally inefficient. Christensen and Laugesen \cite{CL10} proposed relaxing the exact reconstruction, asking only that the reconstruction error be at most $\varepsilon$ times the norm of the function. Such a sequence is an \emph{$\varepsilon$-approximate dual frame}, and $\varepsilon$ is larger than the worst-case relative error. Approximate duals are easier to obtain than exact ones and they can be sought inside a prescribed class of operators. 
Frames have been studied using tools from many mathematical areas, including operator theory \cite{christensen2017operator}, combinatorics \cite{bownik2015combinatorial}, matroid theory \cite{bernstein2020algebraic}, probability theory and optimal transport \cite{ehler2013probabilistic}, and algebraic geometry \cite{cahill2013algebraic}. For more background on frame theory, see \cite{christensen2003introduction}.

In this paper, we study frames in the context of complex function spaces.  Specifically, we study frames using the  \emph{weighted composition operator} (WCO):
\begin{equation*}
    W_{\psi,\varphi}f = \psi \cdot (f\circ \varphi),
\end{equation*}
which includes the multiplication and composition operators with composition function $\varphi$ and weight function $\psi$. These operators are efficient to apply, requiring only multiplication and one change of variable rather than a spectral calculation. When such an operator maps frames to frames is well understood in many Hilbert spaces of analytic functions (see \cite{dong2024weighted,dong2024weighted2,manhas2019weighted,manhas2024fredholm}, for example). In particular, a motivating result to the present study is the main result of \cite{manhas2019weighted} concerning the preservation of frames in the general weighted Bergman space $A^{2}_{\alpha}(\mathbb{B}_{n})$: 
\begin{thm}[\cite{manhas2019weighted}]
    Suppose $\alpha>-(n+1)$. Let $\psi$ be a holomorphic function on $\mathbb{B}_{n}$ and $\varphi$ a holomorphic self-map of $\mathbb{B}_{n}$ such that the weighted composition operator $W_{\psi,\varphi}$ is bounded on $A^{2}_{\alpha}(\mathbb{B}_{n})$. The following statements are equivalent:
    \begin{itemize}
        \item[1.] $W_{\psi,\varphi}$ preserves frames on $A^{2}_{\alpha}(\mathbb{B}_{n})$. 
        \item[2.] $W_{\psi,\varphi}$ is bounded below on $A^{2}_{\alpha}(\mathbb{B}_{n})$.
        \item[3.] $W_{\psi,\varphi}$ is surjective on $A^{2}_{\alpha}(\mathbb{B}_{n})$.
        \item[4.] $W_{\psi,\varphi}$ is invertible on $A^{2}_{\alpha}(\mathbb{B}_{n})$.
        \item[5.] $\psi$ and $1/\psi$ are bounded, and $\varphi$ is an automorphism of $\mathbb{B}_{n}$. 
    \end{itemize}
\end{thm}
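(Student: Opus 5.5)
The plan is the cycle $5\Rightarrow4\Rightarrow1\Rightarrow2$, together with $4\Rightarrow3\Rightarrow2\Rightarrow5$. Bounded-operator frame theory handles the soft implications; the function theory of $A^{2}_{\alpha}(\mathbb{B}_{n})$ is needed only for $2\Rightarrow5$.

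\textbf{Soft implications.} Let $T=W_{\psi,\varphi}$.
\begin{itemize}
\item If $T$ is invertible and $\{f_n\}$ is a frame with bounds $A,B$, then $\langle f,Tf_n\rangle=\langle T^*f,f_n\rangle$. Hence $\sum|\langle f,Tf_n\rangle|^2$ lies between $A\|T^*f\|^2$ and $B\|T^*f\|^2$. Since $T^*$ is invertible, this is comparable to $\|f\|^2$, so $4\Rightarrow1$.
\item For $1\Rightarrow2$, apply $T$ to an orthonormal basis $\{e_n\}$. If $\{Te_n\}$ is a frame with lower bound $A$, then $\sum|\langle f,Te_n\rangle|^2=\|T^*f\|^2\ge A\|f\|^2$. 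So $T^*$ is bounded below, and therefore $T$ is surjective.
\item We then need $T$ injective with closed range. I expect to obtain this from $3\Rightarrow2$ directly.
\item $4\Rightarrow3$ is trivial.
\item For $3\Rightarrow2$: surjectivity gives that $T^*$ is bounded below, so it suffices to show $T$ is injective. If $\psi\not\equiv0$ and $\psi(f\circ\varphi)=0$, then $f\circ\varphi=0$ on an open set. If $\varphi$ is nonconstant, its image contains an open set, so $f\equiv0$. A constant $\varphi$ or $\psi\equiv0$ gives a range of dimension at most one, which contradicts surjectivity. This yields injectivity, and combined with surjectivity, invertibility.
\end{itemize}
I would therefore reorganize as $1\Rightarrow3\Rightarrow4\Rightarrow2$, using the open mapping theorem for $3\Rightarrow4$.

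\textbf{From 2 to 5.} Assume $T$ is bounded below, say $\|Tf\|\ge c\|f\|$. The tool is the normalized reproducing kernels $k_w=K_w/\|K_w\|$, where $K_w(z)=(1-\langle z,w\rangle)^{-(n+1+\alpha)}$. The identity $T^*K_w=\overline{\psi(w)}K_{\varphi(w)}$ gives
\[
\|T^*k_w\|^2=|\psi(w)|^2\,\frac{(1-|w|^2)^{n+1+\alpha}}{(1-|\varphi(w)|^2)^{n+1+\alpha}}.
\]
Boundedness of $T$ bounds this quantity above. When $T$ is invertible, $T^*$ is bounded below, which bounds it below.

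\textbf{Main obstacle.} The hard step is showing that $\varphi$ is an automorphism.
\begin{itemize}
\item First, $\varphi$ is injective. If $\varphi(a)=\varphi(b)$ with $a\ne b$, then the functional $f\mapsto f(a)/\psi(a)-f(b)/\psi(b)$ vanishes on the range of $T$. Surjectivity forces it to vanish on every $f$, which is false since point evaluations at distinct points are independent.
\item Second, $\varphi$ is surjective onto the ball. Apply $T^{-1}$ to the coordinate functions and to the constant $1$: there exist $g_0,g_j$ with $\psi\, (g_0\circ\varphi)=1$ and $\psi\,(g_j\circ\varphi)=z_j$. Then $z_j/1$ equals $(g_j/g_0)\circ\varphi$, so $\varphi$ has a holomorphic left inverse $G=(g_1/g_0,\dots,g_n/g_0)$ defined where $g_0\ne0$.
\item Showing that $G$ maps into the ball and that $g_0$ is zero-free needs care. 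I would use the kernel lower bound: it forces $|\varphi(w)|\to1$ as $|w|\to1$, so $\varphi$ is proper. An injective proper holomorphic self-map of the ball is an automorphism (Alexander/Rudin for $n\ge2$; for $n=1$ a proper map is a finite Blaschke product, and injectivity makes it a Möbius map).
\end{itemize}

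\textbf{Finishing, and the converse.} Once $\varphi$ is an automorphism, $(1-|w|^2)/(1-|\varphi(w)|^2)$ is bounded above and below. The two-sided kernel bounds then give $\psi$ and $1/\psi$ bounded. Conversely, if $\varphi$ is an automorphism and $\psi^{\pm1}$ are bounded, the change of variables formula shows $C_\varphi$ is invertible, with inverse $C_{\varphi^{-1}}$. Multiplication by $\psi$ is invertible with inverse multiplication by $1/\psi$, which gives $5\Rightarrow4$.
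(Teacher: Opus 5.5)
The paper does not prove this theorem; it quotes it from \cite{manhas2019weighted}. Your proposal therefore has to stand on its own, and it has a genuine gap at item 2: no step of yours starts from $\|Tf\|\ge c\|f\|$ alone. The part headed ``From 2 to 5'' uses three things that bounded below does not give:
surjectivity (you need the functional $f\mapsto f(a)/\psi(a)-f(b)/\psi(b)$, which vanishes on the range, to vanish everywhere); the inverse $T^{-1}$; and a lower bound on $\|T^*k_w\|$, which requires $T^*$ to be bounded below. So the argument really targets $4\Rightarrow5$. At best your implications give $1\Leftrightarrow3\Leftrightarrow4\Leftrightarrow5\Rightarrow2$, never $2\Rightarrow$ anything.

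This cannot be repaired, because with item 2 read literally the implication is false. Take $n=1$, so $N=n+1+\alpha=2+\alpha>0$, and normalize the norm so that the kernel is $(1-\overline{w}z)^{-N}$. Let $\psi\equiv1$ and $\varphi(z)=z^2$. Then $C_{z^2}$ sends the orthogonal monomials $z^k$ to $z^{2k}$, and $\|z^{2k}\|^2/\|z^k\|^2=\frac{(2k)!\,\Gamma(k+N)}{k!\,\Gamma(2k+N)}$ is positive and tends to $2^{1-N}$. Hence $C_{z^2}$ is bounded and bounded below on $A^2_\alpha(\D)$ for every $\alpha>-2$; on $H^2(\D)$ it is an isometry. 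Yet its range consists of even functions, so it is neither surjective nor frame-preserving, and $z^2$ is not an automorphism. The condition that fits the cycle is that $W^*_{\psi,\varphi}$ is bounded below. By the closed range theorem this is equivalent to 3, and it is exactly what your $1\Rightarrow3$ bullet uses.

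Three steps inside the valid cycle also need repair.

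\textbf{Injectivity in $3\Rightarrow4$.} For $n\ge2$ a nonconstant $\varphi$ need not have image containing an open set: for $\varphi(z)=(z_1,0,\dots,0)$ and $f=z_2$ one has $f\circ\varphi=0$. Your own later bullet supplies the fix. Surjectivity gives $\psi\,(g_0\circ\varphi)=1$, so $\psi$ is zero-free and $g_0\ne0$ on $\varphi(\mathbb{B}_n)$, which also removes your worry about zeros of $g_0$. Then $G\circ\varphi=\mathrm{id}$ makes $\varphi$ injective with $D\varphi$ invertible everywhere, hence an open map, and injectivity of $T$ follows.

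\textbf{Properness.} With $N=n+1+\alpha$, the inequality $|\psi(w)|^2\bigl(\frac{1-|w|^2}{1-|\varphi(w)|^2}\bigr)^{N}\ge c^2$ does not by itself force $|\varphi(w)|\to1$, since $\psi$ is not yet known to be bounded. Argue instead by contradiction: suppose $|w_k|\to1$ while $\varphi(w_k)\to q\in\mathbb{B}_n$. Then $u_k=K_{w_k}/\overline{\psi(w_k)}$ is bounded in norm and weakly null, because normalized kernels are weakly null. But $T^*u_k=K_{\varphi(w_k)}\to K_q\ne0$ in norm, which is impossible.

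\textbf{The direction $5\Rightarrow4$.} Your argument (change of variables, and $M_{1/\psi}$ bounded because $1/\psi$ is bounded) is only immediate for $\alpha\ge-1$. For $-(n+1)<\alpha<-1$ the norm is defined through radial derivatives, and bounded functions need not be multipliers (for example on the Drury--Arveson space). That range needs a separate argument.
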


What has not been understood, to the best knowledge of the authors, is under what circumstances WCOs generate dual or approximate dual frames in a separable Hilbert space $\mathcal{H}$. That is, given a frame $\{f_{n}\}$ for $\mathcal{H}$ and a WCO $W:\mathcal{H}\to\mathcal{H}$, under what conditions  is $\{Wf_{n}\}$ dual to $\{f_{n}\}$? For the present study, we will focus on the settings of the Hardy space $H^{2}(\mathbb{D})$ and the Bargmann-Fock space $\mathcal{F}^{2}(\mathbb{C}^{n})$. 
The main results of this paper will show that in the case of the Hardy and Bargmann-Fock spaces, essentially all WCOs with non-trivial composition operators cannot generate dual frames:
\begin{thm}\label{main_1}
    Let $\mathcal{H}=H^{2}(\mathbb{D})\hspace{.15cm}\text{or}\hspace{.15cm}\mathcal{F}^{2}(\mathbb{C}^{n})$, and let $\{f_{n}\}$ be a frame for $\mathcal{H}$ with frame operator $S$. If $W=W_{\psi,\varphi}$ is a bounded WCO such that $\{Wf_{n}\}$ is a dual frame to $\{f_{n}\}$, then $W = \lambda I$ for some $\lambda>0$, and $\{f_{n}\}$ and $\{Wf_{n}\}$ must be tight. Furthermore, $W = S^{-1}$ and $\{Wf_{n}\}$ is the canonical dual frame of $\{f_{n}\}$.
\end{thm}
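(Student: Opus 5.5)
The plan is to translate duality into an operator identity and then exploit the special structure of weighted composition operators through their action on reproducing kernels. If $\{Wf_n\}$ is dual to $\{f_n\}$, then for every $h\in\mathcal{H}$
\begin{equation*}
h=\sum_{n}\ip{h}{f_n}Wf_n = W\Big(\sum_n \ip{h}{f_n} f_n\Big)=WSh ,
\end{equation*}
using boundedness of $W$ to pass it through the convergent sum. Hence $WS=I$. Since $S$ is invertible, this gives $W=S^{-1}$. In particular $W$ is positive and invertible, hence self-adjoint. The other identity $h=\sum\ip{h}{Wf_n}f_n=SW^*h$ gives the same conclusion $W^*=S^{-1}$, which is consistent. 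So everything reduces to the following claim: \emph{a bounded weighted composition operator on $H^2(\D)$ or $\mathcal{F}^2(\C^n)$ that is self-adjoint and positive and invertible must be $\lambda I$ with $\lambda>0$.} Once this holds, we get $S=\lambda^{-1}I$. Thus $\{f_n\}$ is tight, $\{Wf_n\}=\{\lambda f_n\}$ is tight, and it is the canonical dual.

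To prove the claim, I use the kernel formula $W_{\psi,\varphi}^*K_w=\overline{\psi(w)}K_{\varphi(w)}$, where $K_w$ is the Szeg\H{o} kernel $(1-\bar w z)^{-1}$ or the Fock kernel $e^{\ip{z}{w}}$.

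First, invertibility of $W$ forces $\psi(w)\neq 0$ for every $w$. Otherwise $W^*K_w=0$, contradicting injectivity of $W^*$.

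Second, self-adjointness gives $\psi\cdot(K_w\circ\varphi)=\overline{\psi(w)}K_{\varphi(w)}$ for all $w$. Evaluating at $z=0$ and using $K_w(0)=1$ yields $\psi(0)=\overline{\psi(w)}$. Hence $\psi$ is a constant $c$, and $c$ is real and nonzero. Dividing out $c$ leaves $K_w(\varphi(z))=K_{\varphi(w)}(z)$. In the Hardy case this reads $\overline{w}\varphi(z)=\overline{\varphi(w)}z$. In the Fock case, taking logarithms (both sides are entire and agree at $z=0$) gives $\ip{\varphi(z)}{w}=\ip{z}{\varphi(w)}$. In either case $\varphi$ is linear, $\varphi(z)=Az$ with $A$ self-adjoint (in the disc, $\varphi(z)=az$ with $a\in\R$, $|a|\le 1$).

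Third, invertibility of $W=cC_A$ forces $A$ to be invertible. Otherwise $W^*K_w=cK_{Aw}$ fails to be injective on the kernels, or, in the disc case $a=0$, $W$ has rank one. In the disc, surjectivity of $C_\varphi$ also forces $\varphi$ to be an automorphism, so $|a|=1$ and $a=\pm1$.

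The remaining step, which I expect to be the main obstacle, is positivity. I compute $\ip{WK_w}{K_w}=c\,K_w(Aw)$. In the Hardy case this is $c(1-a|w|^2)^{-1}$, and in the Fock case it is $c\,e^{\ip{Aw}{w}}$, and these must be positive for all $w$. Positivity alone does not kill $A\neq I$ in Fock space: for example $C_{A}$ with $A$ positive definite is a positive operator. So here I would need the full identity $WS=I$ together with an extra constraint from boundedness and invertibility. For the Fock space, $C_A$ is bounded only if $\|A\|\le1$, and by applying this to the inverse $C_{A^{-1}}$ we get $\|A^{-1}\|\le 1$. Hence $A$ is unitary and self-adjoint, so $A^2=I$. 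Positivity of $W=cC_A$ then forces $c>0$. Its spectrum is $\{c,-c\}$ unless $A=I$, since $C_A$ acting on monomials has eigenvalues $\pm1$; so we conclude $A=I$. The Hardy case is the same: $a=-1$ gives $C_\varphi f(z)=f(-z)$, which is not positive because $\ip{C_\varphi z}{z}=-1$. Hence $a=1$, and $W=cI$ with $c>0$.
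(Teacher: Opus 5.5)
Your second step contains an error that the rest of the argument inherits. Evaluating $\psi\cdot(K_w\circ\varphi)=\overline{\psi(w)}K_{\varphi(w)}$ at $z=0$ gives $\psi(0)K_w(\varphi(0))=\overline{\psi(w)}$. The left side involves $K_w$ at the point $\varphi(0)$, not at $0$, so it equals $\psi(0)$ only when $\varphi(0)=0$.

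What self-adjointness actually yields is $\psi(w)=\overline{\psi(0)}/(1-\overline{\varphi(0)}w)$ on $H^2(\D)$ and $\psi(w)=\overline{\psi(0)}e^{\ip{w}{\varphi(0)}}$ on $\mathcal{F}^2(\C^n)$. The case $\varphi(0)\neq 0$ is not vacuous:
\begin{itemize}
\item On $H^2(\D)$, for $a\in\D\setminus\{0\}$, the pair $\varphi(z)=(a-z)/(1-\overline{a}z)$, $\psi(z)=1/(1-\overline{a}z)$ gives a bounded, invertible, self-adjoint WCO with nonconstant weight.
\item On $\mathcal{F}^2(\C^n)$, the pair $\varphi(z)=-z+b$, $\psi(z)=e^{\ip{z}{b}}$ with $b\neq0$ does the same.
\end{itemize}
So the implication ``self-adjoint and invertible $\Rightarrow$ $\psi\equiv c$, $\varphi(z)=Az$'' is false. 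Everything after it covers only the subcase $\varphi(0)=0$.

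These operators are self-adjoint and invertible, so only positivity can exclude them. Your kernel test does not do it: in the Hardy example, $\ip{WK_w}{K_w}=1/(|w-a|^2+1-|a|^2)>0$ for every $w\in\D$.

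The missing idea is the one in the proof of Theorem~\ref{no_dual}. With the correct formula for $\psi$, compare coefficients of $\overline{w}z^m$ in the self-adjointness identity, and use that $\varphi$ is an automorphism (Gunatillake). This gives $\varphi(z)=(\varphi(0)-z)/(1-\overline{\varphi(0)}z)$ or $\varphi(z)=\pm z$. Hence $\varphi\circ\varphi=\mathrm{id}$ and $\psi\cdot(\psi\circ\varphi)\equiv\lambda^2$, so $W^2=\lambda^2 I$. Uniqueness of positive square roots then forces $W=\lambda I$.

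On $\mathcal{F}^2(\C^n)$ the paper avoids kernel algebra altogether. By Lemma~\ref{wco_fock_lem}, an invertible WCO satisfies $\|Wg\|=r\|g\|$. Hence $\|Sf\|=r^{-1}\|f\|$, so $S^2=r^{-2}I$ and $S=r^{-1}I$. As a one-line repair of your own Fock argument: $W\ge 0$ together with $W^*W=r^2I$ gives $W^2=r^2I$, hence $W=rI$.

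Your treatment of the $\varphi(0)=0$ subcase (linearity, unitarity of $A$, the eigenvalue $-1$ on monomials) is fine. Without one of the arguments above, however, the general case is not proved.
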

The proof of Theorem \ref{main_1} will be presented across Theorem \ref{no_dual} and Proposition \ref{fock_tight}. Surprisingly, this result extends to the case of approximate dual frames in the Hardy space, where we show that under mild assumptions on the symbol of the multiplication operator the only possible WCOs which generate approximate duals are those for which the composition operator is the identity (see Theorem \ref{approx_dual}).  Finally, we consider the implications on tightness of the frame when duals can be generated by WCOs. In this, we obtain strikingly different results between the Hardy and Bargmann-Fock spaces:
\begin{thm}\label{main_2}
    Let $0<\varepsilon<1$. 
    \begin{itemize}
        \item[1.] There are bounded WCOs $W:H^{2}(\mathbb{D})\to H^{2}(\mathbb{D})$ and frames $\{f_{n}\}$ such that $\{Wf_{n}\}$ is an $\varepsilon$-approximate dual frame to $\{f_{n}\}$, but $\{f_{n}\}$ is not approximately tight.
        \item[2.] Let $\{f_{n}\}$ be a frame for $\mathcal{F}^{2}(\mathbb{C}^{n})$ with optimal frame bounds $0<A\leq B<\infty$. Then there exists a bounded WCO $W:\mathcal{F}^{2}(\mathbb{C}^{n})\to\mathcal{F}^{2}(\mathbb{C}^{n})$ such that $\{Wf_{n}\}$ is an $\varepsilon$-approximate dual frame to $\{f_{n}\}$ if and only if 
        \begin{align*}
            \frac{B-A}{A+B}\leq\varepsilon.
        \end{align*}
    \end{itemize}
\end{thm}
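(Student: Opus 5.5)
We use the standard reduction: with $S$ the frame operator of $\{f_{n}\}$,
\begin{align*}
\sum_{n}\ip{h}{f_{n}}Wf_{n}=WSh ,
\end{align*}
so $\{Wf_{n}\}$ is an $\varepsilon$-approximate dual of $\{f_{n}\}$ exactly when $\norm{I-WS}\le\varepsilon$ (the other order of the reconstruction formula gives $\norm{I-SW^{*}}=\norm{I-WS}$). Both parts of the statement are questions about how close $WS$ can be to $I$ when $S$ is positive and invertible with $A\le S\le B$ and $W$ is a bounded WCO. In this plan I take ``$\{f_{n}\}$ is approximately tight'' to mean that $\norm{I-\lambda S}$ is small for some scalar $\lambda>0$, equivalently that $B/A$ is close to $1$. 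Part 1 should be checked against the paper's actual definition.

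\emph{Part 2 (Bargmann--Fock).} Sufficiency is the easy direction. Take $W=\lambda I$, which is the WCO with $\psi\equiv\lambda$ and $\varphi=\mathrm{id}$. Since $S$ is self-adjoint with spectrum in $[A,B]$, containing both endpoints by optimality of the bounds,
\begin{align*}
\norm{I-\lambda S}=\max\{|1-\lambda A|,\,|1-\lambda B|\}.
\end{align*}
This is minimized at $\lambda=2/(A+B)$, where it equals $(B-A)/(A+B)$. Hence the condition $(B-A)/(A+B)\le\varepsilon$ suffices.

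For necessity, $\norm{I-WS}\le\varepsilon<1$ makes $WS$ invertible, so $W$ is invertible. I would then invoke the known description of bounded (and invertible) WCOs on $\mathcal{F}^{2}(\mathbb{C}^{n})$: $\varphi(z)=Uz+b$ with $U$ unitary, and $\psi=c\,k_{w}/\norm{k_{w}}$ for a suitable $w$ determined by $b$. Thus $W=cV$ with $V$ unitary, a Weyl translation composed with a unitary rotation.

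Next I would show that $V$ must be trivial, i.e.\ $b=0$ and $U=I$. My first attempt would evaluate $\ip{WSh}{h}$ on normalized kernels $h=k_{z}/\norm{k_{z}}$, exploiting that $V$ moves $k_{z}$ to a multiple of $k_{Uz+b}$. The approximate identity $\ip{WSk_{z}}{k_{z}}\approx\norm{k_{z}}^{2}$ should then be incompatible with $|\ip{Vk_{z}}{k_{z}}|/\norm{k_{z}}^{2}\to0$ along a suitable sequence, since $Sk_{z}$ stays comparable in size to $k_{z}$. Once $W=cI$, we have $\norm{I-cS}\ge\sup_{t\in[A,B]}|1-ct|$, and for complex $c$ this supremum is still at least $(B-A)/(A+B)$: a convexity argument shows the best complex $c$ is real. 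This gives necessity. I expect ruling out a genuinely nontrivial unitary $V$ to be the main obstacle. If the kernel-testing argument is not decisive, I would fall back on the rigidity mechanism of Theorem~\ref{main_1}.

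\emph{Part 1 (Hardy).} The plan is to build an example in which $W$ is a pure multiplication, so $\varphi=\mathrm{id}$ as Theorem~\ref{approx_dual} suggests is forced. Then $W=T_{\psi}$ with $\psi\in H^{\infty}$, and the frame will have a non-scalar Toeplitz-type frame operator. Concretely, I would take $h$ outer with $h,1/h\in H^{\infty}$, set
\begin{align*}
S=T_{1/h}\,T_{\overline{1/h}}=(T_{h}^{*}T_{h})^{-1},
\end{align*}
and realize $S$ as a frame operator via $f_{n}=S^{1/2}e_{n}$. The ratio $B/A$ is governed by $\sup|h|^{2}/\inf|h|^{2}$, which can be made as large as desired by the choice of $h$, destroying approximate tightness. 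It then remains to choose $\psi$ so that $\norm{I-T_{\psi}S}\le\varepsilon$. Here the difficulty is that $T_{\psi}T_{1/h}=T_{\psi/h}$ while the coanalytic factor $T_{\overline{1/h}}$ does not cancel. So I would instead tune the example, for instance by letting $h$ be close to a function of modulus one times a large rescaling of a nonconstant factor. This produces a frame whose bounds are far apart while the error is still controlled. Verifying this estimate is the delicate point of Part~1.
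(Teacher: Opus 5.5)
Your sufficiency argument in Part~2 ($W=\tfrac{2}{A+B}I$) is correct and is the paper's. The other two pieces have genuine gaps.

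\textbf{Part 1.} You never produce an example. After noting that $T_{\overline{1/h}}$ does not cancel against $T_\psi$, the proposed ``tuning'' of $h$ is unspecified and its error estimate is left open, and that estimate is the entire content of Part~1. The missing idea is to build $S$ out of $W$, rather than trying to cancel a factorization of $S$:
\begin{itemize}
\item Write $M_\psi=P+iQ$, with $P=\tfrac12(M_\psi+M_\psi^*)=T_{\mathrm{Re}\,\psi}$ and $Q=\tfrac1{2i}(M_\psi-M_\psi^*)=T_{\mathrm{Im}\,\psi}$, both self-adjoint.
\item Take $f_n=P^{-1/2}z^n$, whose frame operator is $S=P^{-1}$.
\item Then $I-M_\psi S=-iQP^{-1}$, so $\norm{I-M_\psi S}\le\norm{Q}/\min\sigma(P)$.
\item Let $\psi$ be a Riemann map of $\D$ onto the thin rectangle $\{1<\mathrm{Re}\,w<M,\ |\mathrm{Im}\,w|<\varepsilon\}$. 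This gives $\sigma(P)=[1,M]$ and $\norm{Q}\le\varepsilon$.
\end{itemize}
So the error is at most $\varepsilon$, while $B/A=M$ is arbitrary. This actually fits your own setup, with $|h|^2=\mathrm{Re}\,\psi$ on $\T$. What is needed is that the positive function $|h|^2$ lie uniformly within $\varepsilon\inf|h|^2$ of some $\psi\in H^\infty$, while $\sup|h|^2/\inf|h|^2$ is large.

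\textbf{Part 2, necessity.} You set yourself an unnecessary and unproved task: showing that the unitary factor $V$ in $W=cV$ is trivial.
\begin{itemize}
\item The kernel test does not do it. Writing $V^*k_z=\gamma_z k_{\varphi(z)}$ with $|\gamma_z|=1$, one has $\ip{WSk_z}{k_z}=c\,\overline{\gamma_z}\ip{Sk_z}{k_{\varphi(z)}}$. Knowing $\norm{Sk_z}\asymp\norm{k_z}$ gives no control of the overlap of $Sk_z$ with $k_{\varphi(z)}$, because $S$ is an arbitrary positive operator and need not be even approximately diagonal on kernels.
\item The fallback to Theorem~\ref{main_1} is also unavailable. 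Its rigidity comes from $W=S^{-1}$ being positive, and that is lost for approximate duals.
\item The claim itself is true. It follows from the half-plane spectral argument of Lemma~\ref{spec_lem}(5), which works in any Hilbert space, combined with the spectra of unitary WCOs on $\mathcal{F}^2(\C^n)$. But that is substantial extra work.
\end{itemize}
You do not need it. Since $V$ is unitary, $\norm{WSf}=|c|\norm{Sf}$, so for every unit vector $f$,
\[
\bigl|1-|c|\norm{Sf}\bigr|=\bigl|\norm{f}-\norm{WSf}\bigr|\le\norm{(I-WS)f}\le\varepsilon .
\]
Testing on approximate eigenvectors of $S$ at $A$ and at $B$ gives $|c|A\ge1-\varepsilon$ and $|c|B\le1+\varepsilon$. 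Hence $B/A\le(1+\varepsilon)/(1-\varepsilon)$, i.e.\ $(B-A)/(A+B)\le\varepsilon$. This reverse-triangle-inequality step is exactly how the paper concludes.
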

The latter result seems to imply that $\varepsilon$-approximate dual frames can be generated from WCOs only in the case of starting with approximately tight frames in the setting of the Bargmann-Fock space, while in the Hardy space this is not guaranteed. The proof of Theorem \ref{main_2} will be presented across Propositions \ref{thm:hardy_approximate_dual} and \ref{fock_main}. 

The paper is structured as follows. In Section \ref{sec:prelim}, we will present some preliminary material and definitions. Section \ref{sec:dualpreservation_Hardy} will prove the Hardy space portions of Theorems \ref{main_1} and \ref{main_2}. Section \ref{sec:dualpreservation_Hardy} will also include results concerning approximate duals in the Hardy space. Section \ref{sec:dualpreservation_Fock} will cover the Bargmann-Fock space portions of Theorems \ref{main_1} and \ref{main_2} and conclude the paper.

\section{Preliminaries}\label{sec:prelim}
Before proceeding to the main results of this paper, we first cover some preliminary material on WCOs and frames. Standard references for this material are \cite{christensen2003introduction,cowen2019composition,shapiro2012composition}. First, we will cover the relevant spaces of analytic functions in which we will consider frames. Throughout the paper, $\mathbb{D}$ refers to the unit disc in the complex plane $\mathbb{C}$ without boundary, $\mathbb{T}$ is the unit circle in $\mathbb{C}$, and $\mathcal{O}(\Omega)$ refers to the set of holomorphic functions over a domain $\Omega\subset\mathbb{C}^{n}$. Perhaps the most classical space of holomorphic functions is the Hardy space.
\begin{definition}\label{def:Hardy}
    Let $1\leq p<\infty$. We call the set of functions $f\in\mathcal{O}(\mathbb{D})$ such that 
    \begin{align*}
        \sup_{0\leq r<1}\int_{0}^{2\pi}\left|f(re^{i\theta})\right|^{p}d\theta<\infty
    \end{align*}
    the $p$-Hardy space over the unit disc $\mathbb{D}$ and denote it by $H^{p}(\mathbb{D})$. If $p=\infty$, we denote the set of functions $f\in\mathcal{O}(\mathbb{D})$ such that 
    \begin{align*}
        \sup_{|z|<1}|f(z)|<\infty
    \end{align*}
    as the space $H^{\infty}(\mathbb{D})$. 
\end{definition}
It is well-known \cite{duren1970theory,paulsen2016introduction} that $H^{2}(\mathbb{D})$ is a Reproducing Kernel Hilbert Space (RKHS), i.e., for each $w\in\mathbb{D}$ the point evaluation functional $\delta_{w}:H^{2}(\mathbb{D})\to\mathbb{C}$ given by $\delta_w(f) = f(w)$ is bounded. An application of the Riesz Representation Theorem \cite{lax2014functional} then gives the existence of vectors $K_{w}\in H^{2}(\mathbb{D})$, called \textit{reproducing kernels} at $w$ such that $\ip{f}{K_{w}}=f(w)$. In the case of the Hardy space, the reproducing kernels are 
\begin{align*}
    K_{w}(z)=\frac{1}{1-\overline{w}z},
\end{align*}
and the inner product is induced by the integral above. The other analytic function space of interest for this paper will be the Bargmann-Fock space \cite{Z12}.
\begin{definition}
Let $dV$ denote the Lebesgue volume measure on $\mathbb{C}^{n}$ and set
\[
  d\lambda(z)=\pi^{-n}e^{-\|z\|^{2}}\,dV(z).
\]
We call the set of functions $f\in\mathcal{O}(\mathbb{C}^{n})$ such that
\[
  \|f\|^{2}=\int_{\mathbb{C}^{n}}|f(z)|^{2}\,d\lambda(z)<\infty
\]
the \emph{Bargmann-Fock space}, and denote it by $\mathcal{F}^{2}(\mathbb{C}^{n})$. Furthermore, $\mathcal{F}^{2}(\mathbb{C}^{n})$ is a Hilbert space, and
the inner product is given as
$$\langle f,g\rangle=\int_{\mathbb{C}^{n}}f(z)\overline{g}(z)\,d\lambda(z).$$
\end{definition}
Again, it is known \cite{Z12} that the Bargmann-Fock space $\mathcal{F}^{2}(\mathbb{C}^{n})$ is an RKHS with reproducing kernel 
\begin{align*}
    K_{w}(z)=e^{\ip{z}{w}}. 
\end{align*}
For the entirety of this paper, we will be interested in frames on these spaces. 
\begin{definition}
    Let $\mathcal{H}$ be a separable Hilbert space. We call a sequence of vectors $\{f_{n}\} \subset \mathcal{H}$ a frame if there are constants $0<A\leq B<\infty$ such that for each $f\in\mathcal{H}$,
    \begin{align*}
        A\norm{f}^{2}\leq \sum_{n=1}^{\infty}\left|\ip{f}{f_{n}}\right|^{2}\leq B\norm{f}^{2}.
    \end{align*}
    We say the frame is tight if $A=B$ and Parseval if $A=B=1$. In particular, $\{f_{n}\}\subset\mathcal{H}$ is called a Bessel sequence if only the upper bound holds.  
\end{definition}
While many such constants $A$ and $B$ may satisfy the above frame inequality, we will assume $A$ and $B$ are the \textit{optimal} constants, i.e. $A$ is the largest such lower bound while $B$ is the smallest such upper bound. Associated to a given frame $\{f_{n}\} \subset \mathcal{H}$ is an operator $S:\mathcal{H}\to\mathcal{H}$ by 
\begin{align*}
    Sf=\sum_{n=1}^{\infty}\ip{f}{f_{n}}f_{n}, \ f \in \mathcal{H}.
\end{align*}
This operator is referred to as the \textit{frame operator} associated to $\{f_{n}\}$ and will be central to the results to follow. Below, we collect several well-known and standard facts about the frame operator without proof \cite{christensen2003introduction}, which will be used throughout. 
\begin{lem}\label{lemma:frame_operator}
    Let $\mathcal{H}$ be a separable Hilbert space and  $\{f_{n}\}$ be a frame for $\mathcal{H}$ with bounds $0<A\leq B<\infty$. If $S:\mathcal{H}\to\mathcal{H}$ is the associated frame operator, then 
    \begin{itemize}
        \item[1.] $S$ is bounded, positive, and invertible with a bounded inverse. 
        \item[2.] $\sigma(S)\subset [A,B]$, where $\sigma(S)$ is the spectrum of $S$. 
    \end{itemize}
\end{lem}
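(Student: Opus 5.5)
Since Lemma \ref{lemma:frame_operator} consists of standard facts, we would verify them directly from the frame inequality, in three steps: boundedness and positivity, identification of the numerical range, and then invertibility and the spectral inclusion.

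\emph{Boundedness and positivity.} We begin with the synthesis map $T:\ell^{2}\to\mathcal{H}$, $T\{c_n\}=\sum c_n f_n$. For finitely supported sequences, duality and the upper frame bound give
\begin{align*}
\norm{\sum c_nf_n}=\sup_{\norm{h}=1}\left|\sum c_n\ip{f_n}{h}\right|\leq \norm{c}_{\ell^2}\sqrt{B}.
\end{align*}
Hence the series converges unconditionally, $T$ is bounded with $\norm{T}\leq\sqrt{B}$, and its adjoint is the analysis map $T^{*}f=\{\ip{f}{f_n}\}$. It follows that $S=TT^{*}$ is bounded and self-adjoint. Moreover,
\begin{align*}
\ip{Sf}{f}=\sum|\ip{f}{f_n}|^2\geq 0,
\end{align*}
so $S$ is positive.

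\emph{Numerical range.} The same identity combined with the frame inequality gives
\begin{align*}
A\norm{f}^2\leq\ip{Sf}{f}\leq B\norm{f}^2 \quad\text{for all } f\in\mathcal{H}.
\end{align*}
For a self-adjoint operator, the spectrum lies in the closure of the numerical range, which is $[\inf_{\norm{f}=1}\ip{Sf}{f},\sup_{\norm{f}=1}\ip{Sf}{f}]$. This already yields $\sigma(S)\subset[A,B]$.

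\emph{Invertibility.} Since $A>0$, the inclusion $\sigma(S)\subset[A,B]$ shows $0\notin\sigma(S)$, so $S$ has a bounded inverse. To avoid quoting spectral theory here, we would argue directly instead. Cauchy--Schwarz gives
\begin{align*}
A\norm{f}^2\leq\ip{Sf}{f}\leq\norm{Sf}\norm{f},
\end{align*}
so $S$ is bounded below with $\norm{Sf}\geq A\norm{f}$. Hence $S$ is injective with closed range. Its range is also dense, because $\operatorname{ran}(S)^{\perp}=\ker S^{*}=\ker S=\{0\}$. Therefore $S$ is bijective and $\norm{S^{-1}}\leq 1/A$.

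For the spectral claim itself, we would apply the same bounded-below argument to $S-\lambda I$. If $\lambda<A$, then $S-\lambda I$ satisfies the frame-type lower bound with constant $A-\lambda>0$, so it is invertible. If $\lambda>B$, we apply the argument to $\lambda I-S$, which is bounded below by $\lambda-B>0$. Non-real $\lambda$ are excluded by self-adjointness. Together these give $\sigma(S)\subset[A,B]$.

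No step is a genuine obstacle. The only point needing care is the first one: we must check that $S$ is well defined, meaning that the series defining $Sf$ converges in $\mathcal{H}$. This rests on the duality estimate for the synthesis operator, which uses only the Bessel (upper) bound.
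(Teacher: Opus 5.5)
Your proof is correct. The paper states this lemma without proof, citing Christensen's book, and your route is the standard one found there: $S=TT^{*}$ is bounded and self-adjoint, and the frame inequality is read as $AI\le S\le BI$.

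The only real difference is the invertibility step. Christensen gets it from the Neumann series via $\norm{I-B^{-1}S}\le (B-A)/B<1$, whereas you use bounded below plus dense range. Both work.

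Your version has a small advantage. The same estimate applied to $S-\lambda I$ handles real $\lambda\notin[A,B]$. For non-real $\lambda$, the identity $\left|\text{Im}\ip{(S-\lambda I)f}{f}\right|=|\text{Im}\,\lambda|\norm{f}^{2}$ shows $S-\lambda I$ and its adjoint are bounded below. Together these give $\sigma(S)\subset[A,B]$ without appealing to the numerical-range inclusion, so your second step is redundant.
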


Often, the ideal circumstance is when $\{f_{n}\}$ is a Parseval frame as the frame operator is the identity and we recover a Parseval-like identity. However, this is not always the case. In this case, one can hope that by finding a second family of vectors one can recover a Parseval-type identity to reconstruct signals. This motivates the following definition of dual frames. 

\begin{definition}
    Let $\{f_{n}\}$ be a frame for a separable Hilbert space $\mathcal{H}$. A sequence of vectors $\{g_{n}\}\subset \mathcal{H}$ is said to be a dual frame of  $\{f_{n}\}$ if for each $h\in \mathcal{H}$,
    \begin{align*}
        h=\sum_{n=1}^{\infty}\ip{h}{f_{n}}g_{n}=\sum_{n=1}^{\infty}\ip{h}{g_{n}}f_{n}.
    \end{align*}
    In particular, we call the operator $S_{f,g}:\mathcal{H}\to \mathcal{H}$ by 
    \begin{align*}
        S_{f,g}h=\sum_{n=1}^{\infty}\ip{h}{f_{n}}g_{n}, \ h \in \mathcal{H},
    \end{align*}
    the mixed frame operator associated to the dual pair $\{f_{n}\}$ and $\{g_{n}\}$. 
\end{definition}
For a given frame $\{f_{n}\}$ with frame operator $S$, dual frames are not necessarily unique. However, it is known that one can always construct a so-called canonical dual frame $\{S^{-1}f_{n}\}$ using the frame operator. Indeed, every dual frame $\{g_n\}$ of $\{f_n\}$ can be constructed from the canonical dual frame $\{S^{-1}f_{n}\}$ via the formula
\begin{equation*}
    g_{n}=S^{-1}f_{n}+h_{n}, \ \text{for each $n$},
\end{equation*}
where $\{h_{n}\}$ is a Bessel sequence in $\mathcal{H}$ such that for each $f\in\mathcal{H}$,
\begin{align*}
    \sum_{n=1}^{\infty}\ip{f}{f_{n}}h_{n}=0.
\end{align*}
See \cite{christensen2003introduction} for more details on the characteristics of dual frames. 
However, it can be relatively difficult to compute the inverse $S^{-1}$ depending on the circumstances. Often when one does not have access to a full dual frame, the best hope is to obtain the Parseval condition in an approximate sense, which leads to the notion of approximately dual frames. 
\begin{definition}[\cite{CL10}]\label{def_approx_dual}
    Let $0\leq\varepsilon<1$. Suppose $\mathcal{H}$ is a separable Hilbert space and $\{f_{n}\}$ is a frame for $\mathcal{H}$. We say that a sequence of vectors $\{g_{n}\} \subset \mathcal{H}$ is an $\varepsilon$-approximate dual frame to $\{f_{n}\}$ if the norm of the mixed frame operator $S_{f,g}$ for $\{f_n\}$ and $\{g_n\}$ satisfies 
    $\norm{I-S_{f,g}}\leq\varepsilon$. 
\end{definition}
Finally, we define our main object of study: the weighted composition operator. 

\begin{definition}
Let $\Omega$ be either $\mathbb{D}$ or $\mathbb{C}^{n}$ and $\mathcal{H}$ be either $H^2(\mathbb{D})$ or $\mathcal{F}^2(\mathbb{C}^n)$. Suppose $\psi:\Omega\to\mathbb{C}$ and $\varphi:\Omega\to\Omega$ are functions, and let $M_{\psi}$ and $C_{\varphi}$ denote the operators which multiply by $\psi$ and compose with $\varphi$, respectively. We call the operator $W_{\psi,\varphi}: \mathcal{H} \to \mathcal{H}$ defined by
\begin{align*}
    W_{\psi,\varphi}f(z)=M_{\psi}C_{\varphi}f(z)=\psi(z)f\circ\varphi(z) = \psi(z)f(\varphi(z)), \ f \in \mathcal{H},
\end{align*}
the \emph{Weighted Composition Operator} (WCO) with weight $\psi$ and composition $\varphi$. When not ambiguous, we write $W=W_{\psi,\varphi}$. 
\end{definition}
Finally, an important fact we will use in the results is the following statement about the invertibility of WCOs, established by Gunatillake \cite{gunatillake2011invertible}.
\begin{thm}\cite[Theorem 2.0.1]{gunatillake2011invertible}\label{thm:Gunatillake} The weighted composition operator $W_{\psi,\varphi}$ on $H^2(\D)$ with weight $\psi$ and composition $\varphi$ is invertible if and only if $\psi$ is both bounded and bounded away from zero on the unit disc and $\varphi$ is an automorphism of the unit disc. The inverse operator is the weighted composition operator $W_{1/\psi \circ \varphi^{-1},\varphi^{-1}}.$
    
\end{thm}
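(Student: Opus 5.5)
The proof splits into the easy direction (sufficiency, which also yields the inverse formula) and the harder direction (necessity). The idea is to separate the multiplication part from the composition part by factoring $W_{\psi,\varphi}=M_\psi C_\varphi$.

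\emph{Sufficiency and the inverse formula.} Suppose $\psi$ and $1/\psi$ lie in $H^\infty(\D)$ and $\varphi$ is an automorphism of $\D$.
\begin{itemize}
\item $M_\psi$ is bounded on $H^2(\D)$, and $M_{1/\psi}$ is a bounded two-sided inverse for it.
\item By Littlewood's subordination principle, composition with any analytic self-map of $\D$ is bounded. So $C_\varphi$ and $C_{\varphi^{-1}}$ are bounded, and they are mutually inverse.
\item Hence $W=M_\psi C_\varphi$ is invertible, with $W^{-1}=C_{\varphi^{-1}}M_{1/\psi}$.
\item A one-line computation gives $C_{\varphi^{-1}}M_{1/\psi}f=(1/\psi\circ\varphi^{-1})\cdot (f\circ\varphi^{-1})$. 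This is exactly $W_{1/\psi\circ\varphi^{-1},\varphi^{-1}}$.
\end{itemize}

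\emph{Necessity: reducing to the Fredholm step.} Assume $W$ is bounded and invertible. First, $\varphi$ is nonconstant: otherwise the range of $W$ is contained in $\C\psi$, contradicting surjectivity. Next, for any $f$,
\[
W M_z f=\psi\cdot\varphi\cdot (f\circ\varphi)=M_\varphi W f .
\]
Hence $M_\varphi=W M_z W^{-1}$, and in particular $M_\varphi$ is bounded, so $\varphi\in H^\infty$. Fix $a\in\D$. The operator $M_z-a=M_{z-a}$ is injective and Fredholm with closed range of codimension one, namely the functions vanishing at $a$. By similarity, $M_{\varphi-a}$ is also injective with closed range of codimension exactly one.

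\emph{The key step: $\varphi$ is an automorphism.} This is the step I expect to be the main obstacle. Write $\varphi-a=IO$ with $I$ inner and $O$ outer. By Beurling's theorem the closure of the range of $M_{\varphi-a}$ is $IH^2$. Since the range is closed, $M_O$ must have closed range, which forces $1/O\in H^\infty$. The codimension of $IH^2$ is finite only when $I$ is a finite Blaschke product, in which case it equals the number of zeros of $I$. Codimension one therefore means $I$ is a single Blaschke factor, so $\varphi-a$ has exactly one zero in $\D$, counted with multiplicity. As $a\in\D$ is arbitrary, $\varphi:\D\to\D$ is a bijection, hence an automorphism. The delicate points here are:
\begin{itemize}
\item ruling out a singular inner factor via infinite codimension;
\item justifying that closed range of $M_{IO}$ forces $O$ to be bounded below.
\end{itemize}
An alternative would be to argue through the eigenvectors $(W^*)^{-1}K_a$ of $M_\varphi^*$, but the Fredholm-index route seems cleanest.

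\emph{Conclusion of necessity.} Once $\varphi$ is an automorphism, $C_{\varphi^{-1}}$ is bounded and invertible, so $M_\psi=WC_{\varphi^{-1}}$ is a bounded invertible operator. A bounded multiplier of $H^2$ has symbol in $H^\infty$, so $\psi\in H^\infty$. The inverse of $M_\psi$ must be $M_{1/\psi}$: indeed, $M_\psi g=1$ forces $g=1/\psi$, so $1/\psi$ is holomorphic, and the same multiplier argument gives $1/\psi\in H^\infty$. Thus $\psi$ is bounded and bounded away from zero, completing the proof.
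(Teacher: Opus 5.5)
The paper gives no proof of this statement. It is quoted from Gunatillake and used as a black box (in Theorem~\ref{no_dual} and Lemma~\ref{spec_lem}), so there is no internal argument to compare yours against.

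Your proof is correct and self-contained. Sufficiency and the inverse formula are routine. For necessity, the intertwining $WM_z=M_\varphi W$ makes $M_{\varphi-a}$ similar to $M_{z-a}$, so it is injective with closed range of codimension one. Beurling's theorem identifies that range with $IH^2$, which forces $I$ to be a single Blaschke factor, so $\varphi$ takes each value $a\in\D$ exactly once. Of the two points you flag:
\begin{itemize}
\item The singular-factor issue is covered by the standard fact that $H^2\ominus IH^2$ is finite-dimensional exactly when $I$ is a finite Blaschke product, with dimension equal to its degree.
\item The claim $1/O\in H^\infty$ is true, since closedness gives $IOH^2=IH^2$ and hence $OH^2=H^2$. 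But it is not needed: you only use that the closed range equals its closure $IH^2$.
\end{itemize}

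The alternative you mention is closer to this paper's own toolkit. It starts from $W^*K_w=\overline{\psi(w)}K_{\varphi(w)}$, which the paper uses in Proposition~\ref{W_on_both} and Theorem~\ref{no_dual}. Injectivity of $W^*$ then shows that $\psi$ has no zeros and that $\varphi$ is univalent. For surjectivity, suppose $\varphi(\D)\neq\D$, and pick $w_0\in\D\cap\overline{\varphi(\D)}\setminus\varphi(\D)$ with $\varphi(z_n)\to w_0$.
\begin{itemize}
\item Then $|z_n|\to1$.
\item The lower bound $\|W^*K_{z_n}\|\geq c\|K_{z_n}\|$ forces $|\psi(z_n)|\to\infty$.
\item Consequently $(W^*)^{-1}K_{\varphi(z_n)}=K_{z_n}/\overline{\psi(z_n)}$ tends to $0$ pointwise, yet converges in norm to $(W^*)^{-1}K_{w_0}\neq0$, a contradiction.
\end{itemize}
That route is elementary and uses only the kernel structure and $\|K_z\|\to\infty$ at the boundary, so it transfers to other reproducing kernel spaces. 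Yours gets injectivity and surjectivity of $\varphi$ in a single counting step, at the price of inner--outer factorization and Beurling's theorem, which tie it to $H^2$.
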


\section{Duality Preservation on \texorpdfstring{$H^{2}(\mathbb{D})$}{H²(D)}}\label{sec:dualpreservation_Hardy}

In this section, we study the preservation and generation of dual frames in the Hardy space $H^{2}(\mathbb{D})$. We begin with a result concerning duality preservation of dual pairs of frames. Essentially, the first motivating question is the following: for a frame $\{f_{n}\}$ of $H^{2}(\mathbb{D})$ with dual frame $\{g_{n}\}$, are there conditions on $W=W_{\psi,\varphi}$ which preserves duality, i.e., $\{Wf_{n}\}$ and $\{Wg_{n}\}$ are not only frames for $H^{2}(\mathbb{D})$ but also $\{Wg_{n}\}$ is dual to $\{Wf_{n}\}$? We answer this question by the following proposition. 
\begin{prop}\label{W_on_both}
    Let $\{f_{n}\}$ be a frame for $H^{2}(\mathbb{D})$ and let $\{g_{n}\}$ be a dual frame of $\{f_{n}\}$. Suppose $W:H^{2}(\mathbb{D})\to H^{2}(\mathbb{D})$ is a bounded weighted composition operator. Then $\{Wf_{n}\}$ is a frame with dual frame $\{Wg_{n}\}$ if and only if $W=W_{\psi,\varphi}$ satisfies 
    \begin{equation*}
        \varphi(z)=e^{i\theta}\frac{z-a}{1-\overline{a}z}, 
    \end{equation*}
    and 
    \begin{equation*}
        \psi(z) = c \sqrt{\varphi'(z)}
    \end{equation*}
    for some $a\in\mathbb{D}$ and $\theta, c \in\mathbb{T}$.
    Here $\psi$ is a branch of holomorphic square root of $\varphi'$ on $\D$.
\end{prop}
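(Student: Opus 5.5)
Since $\{g_n\}$ is dual to $\{f_n\}$, we have $\sum_n \ip{h}{f_n}g_n = h$ for all $h$. The plan is to rewrite the mixed frame operator of the image pair in terms of $W$ and $W^*$ and then read off what $W$ must be. For every $h\in H^2(\D)$,
\begin{equation*}
    \sum_{n}\ip{h}{Wf_n}Wg_n = W\Big(\sum_n \ip{W^*h}{f_n} g_n\Big) = WW^*h ,
\end{equation*}
and symmetrically $\sum_n \ip{h}{Wg_n}Wf_n = WW^*h$. Thus $\{Wg_n\}$ is dual to $\{Wf_n\}$ exactly when $WW^*=I$, i.e.\ $W^*$ is an isometry. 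It remains to reconcile this with the requirement that $\{Wf_n\}$ be a frame.

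For the forward direction, the frame property of $\{Wf_n\}$ gives $A\norm{W^*h}^2\le \sum_n|\ip{W^*h}{f_n}|^2 = \sum_n|\ip{h}{Wf_n}|^2$, and the frame property of the image (or directly $WW^*=I$) shows that $W$ is surjective. I would then show that $W$ is injective. A nonzero WCO on $H^2(\D)$ has $\psi\not\equiv 0$ and $\varphi$ nonconstant, since $WW^*=I$ forces $W$ to have infinite-dimensional range, and a constant $\varphi$ would give rank one. Hence $\psi\cdot(f\circ\varphi)=0$ forces $f\circ\varphi=0$ on the open set $\varphi(\D)$, so $f=0$. Therefore $W$ is bijective and hence invertible. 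From $WW^*=I$ we get $W^*=W^{-1}$, so $W$ is unitary.

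Theorem~\ref{thm:Gunatillake} then gives that $\varphi$ is a disc automorphism $e^{i\theta}\frac{z-a}{1-\overline{a}z}$ and that $\psi$ is bounded and bounded away from zero. Next I would pin down $\psi$ by testing unitarity on kernels. Write $W^*K_w=\overline{\psi(w)}K_{\varphi(w)}$, and apply $\norm{W^*K_w}=\norm{K_w}$ to get
\begin{equation*}
    |\psi(w)|^2\,\frac{1}{1-|\varphi(w)|^2}=\frac{1}{1-|w|^2}.
\end{equation*}
Using the identity $1-|\varphi(w)|^2=|\varphi'(w)|(1-|w|^2)$ for automorphisms, this becomes $|\psi|^2=|\varphi'|$. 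Since $\varphi'$ is nonvanishing on the simply connected disc, it has a holomorphic square root, and $\psi/\sqrt{\varphi'}$ is holomorphic with constant modulus $1$. It is therefore a unimodular constant $c$.

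For the converse, given such $\varphi$ and $\psi$, I would show that $W$ is unitary. The standard route is to note that $f\mapsto c\sqrt{\varphi'}\,(f\circ\varphi)$ preserves the boundary $L^2(\T)$ norm by the change of variables $|\varphi'|\,d\theta$ on $\T$, and that it is onto because its inverse has the same form with $\varphi^{-1}$. A unitary $W$ gives $WW^*=I$, and $\{Wf_n\}$ is a frame with bounds $A,B$ because $\sum_n|\ip{h}{Wf_n}|^2=\sum_n|\ip{W^*h}{f_n}|^2$ and $\norm{W^*h}=\norm{h}$. The displayed identity then gives the duality.

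The main obstacle is the forward step that upgrades $WW^*=I$ to invertibility of $W$, namely the injectivity argument. I will also need care that the series manipulations are justified: both sequences are Bessel, so the sums converge unconditionally and $W$ can be pulled through by continuity.
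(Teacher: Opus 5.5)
Your proof is correct, but it reaches the automorphism property by a different route than the paper.

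\textbf{Forward direction.} Both arguments start from $WW^*=I$. The paper never shows $W$ is invertible here. It applies $WW^*=I$ to every kernel $K_w$ and obtains the two-variable identity $\frac{1}{1-\overline{w}z}=\frac{\overline{\psi(w)}\psi(z)}{1-\overline{\varphi(w)}\varphi(z)}$. It specializes this to $w=z$. It then takes moduli in the off-diagonal identity, which shows that $\varphi$ preserves the pseudo-hyperbolic distance. The equality case of Schwarz--Pick then forces $\varphi$ to be an automorphism.

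You instead upgrade $WW^*=I$ to unitarity. You use the rank-one observation and the open mapping theorem to get injectivity, then quote Gunatillake's invertibility theorem (Theorem~\ref{thm:Gunatillake}) to get the automorphism. After that you only need the diagonal identity, which you read off from $\|W^*K_w\|=\|K_w\|$. That equality already follows from $WW^*=I$ alone, so unitarity is not needed at that step.

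\textbf{Trade-off.} The paper's route is self-contained function theory and avoids the external invertibility theorem. Yours replaces the Schwarz--Pick computation with a short soft-analysis argument plus that theorem.

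\textbf{Converse.} The paper verifies $WW^*=I$ on kernels by direct computation. It gets surjectivity of the isometry $W^*$ because its range contains every reproducing kernel. You instead use the boundary change of variables $|\varphi'(\zeta)|\,|d\zeta|=|d\eta|$ on $\T$. You combine this with the fact that $W^{-1}=W_{1/\psi\circ\varphi^{-1},\varphi^{-1}}$ has the same form with $\varphi^{-1}$. Both arguments are standard and valid.

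\textbf{One wording fix.} In the injectivity step, what you mean is: since $\psi\not\equiv 0$ is holomorphic, $\psi\cdot(f\circ\varphi)\equiv0$ gives $f\circ\varphi\equiv 0$ on $\D$. Hence $f$ vanishes on the open set $\varphi(\D)$, so $f\equiv 0$.
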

\begin{proof}
Suppose first that $\{ Wg_n \}$ is a dual frame of $\{ W f_n\}$. For every $h \in H^2(\D)$, it holds that
\begin{equation*}
    h = \sum_n \langle h,Wf_n\rangle Wg_n =W\Big( \sum_n \langle W^*h, f_n \rangle g_n \Big) =WW^*h,
\end{equation*}
the series converges because $\{ f_n\}$ and $\{g_n\}$ are dual; hence $WW^* = I$. Next, for $w \in \D$ and $f \in H^2(\D)$, denoting $K_w$ as the reproducing kernel at $w$ defined after Definition~\ref{def:Hardy} gives
\begin{equation*}
    \langle f, W^*K_w \rangle = \langle Wf, K_w \rangle =\psi(w)f(\varphi(w)) =\langle f, \overline{\psi(w)}K_{\varphi(w)}\rangle.
\end{equation*}
Since $f \in H^2(\D)$ is arbitrary, it holds that $W^*K_w = \overline{\psi(w)}K_{\varphi(w)}$. Now fix $w \in \D$, it holds that for every $z \in \D$,
\begin{equation*}
    (WW^* K_w)(z) = \overline{{\psi(w)}}(WK_{\varphi(w)})(z) = \overline{\psi(w)}\psi(z) K_{\varphi(w)}(\varphi(z)).
\end{equation*}
On the other hand, since $WW^* =I$, $WW^*K_w = K_w$ is an element of $H^2(\D)$ and hence acts as a holomorphic function on $\D$. Therefore,
$$K_w(z) = \overline{\psi(w)}\psi(z) K_{\varphi(w)}(\varphi(z)), \qquad z,w \in \D$$
Writing down the reproducing kernel explicitly, it holds that:
\begin{equation}\label{eq:3.1.1}
    \frac{1}{1-\overline{w}z} = \frac{\overline{\psi(w)}\psi(z)}{1- \overline{\varphi(w)}\varphi(z)}, \quad z,w\in \D.
\end{equation}
Notice that both sides of the denominator are nonzero since $|\overline{w}z|<1$ and $\varphi$ maps $\D$ to $\D$. Taking $w =z$ in Equation~\eqref{eq:3.1.1},
\begin{equation}\label{eq:3.1.2}
    |\psi(z)|^2 (1-|z|^2) = 1 -|\varphi(z)|^2, \quad z \in \D.
\end{equation}
Now take moduli in \eqref{eq:3.1.1} and substitute \eqref{eq:3.1.2} at $z$ and at
$w$. After rearranging,
\[
  \frac{\bigl(1-|\varphi(z)|^{2}\bigr)\bigl(1-|\varphi(w)|^{2}\bigr)}
       {\bigl|1-\overline{\varphi(w)}\varphi(z)\bigr|^{2}}
  =\frac{\bigl(1-|z|^{2}\bigr)\bigl(1-|w|^{2}\bigr)}
        {\bigl|1-\overline{w}z\bigr|^{2}} .
\]
By the equality case
of the Schwarz-Pick lemma \cite{ahlfors1979complex}, $\varphi$ is an automorphism of $\D$. By the classical description of $\text{Aut}(\D)$, every such map is a M\"obius transformation
\[
  \varphi(z)=e^{i\theta}\frac{z-a}{1-\overline{a}z}
  \qquad\text{with }a\in\D,\ \theta\in\R ,
\]
where $a=\varphi^{-1}(0)$ is the unique zero of $\varphi$ in $\D$. For such a
$\varphi$ a direct computation gives
$1-|\varphi(z)|^{2}=\bigl(1-|z|^{2}\bigr)|\varphi'(z)|$, so \eqref{eq:3.1.2}
becomes $|\psi|^{2}=|\varphi'|$ on $\D$.  As $\varphi'$ is zero-free, the
quotient $\psi^{2}/\varphi'$ is holomorphic on $\D$ with constant modulus $1$;
a nonconstant holomorphic function is an open map and the unit circle has empty
interior, so $\psi^{2}/\varphi'$ is a unimodular constant and
$\psi=c\sqrt{\varphi'}$ with $c\in\T$ with a choice of branch cut. 

Conversely, assume that $\varphi$ and $\psi$ have the stated forms. A direct
computation about $\varphi$ and $\psi$ gives
\[
    \frac{\overline{\psi(w)}\psi(z)}
    {1-\overline{\varphi(w)}\varphi(z)}
    =\frac{1}{1-\overline{w}z},
    \qquad z,w\in\D.
\]
Since $W^*K_w=\overline{\psi(w)}K_{\varphi(w)}$, it follows that
\[
    (WW^*K_w)(z)
    =\frac{\overline{\psi(w)}\psi(z)}
    {1-\overline{\varphi(w)}\varphi(z)}
    =K_w(z).
\]
As the linear span of $\{K_w:w\in\D\}$ is dense in $H^2(\D)$, we obtain
$WW^*=I$. Consequently, $W^*$ is an isometry.
 
 An isometry is bounded below, so
its range is closed.  That range contains $\overline{\psi(w)}K_{\varphi(w)}$ for
every $w\in\D$, and $\psi=c\sqrt{\varphi'}$ is zero-free while $\varphi$ maps
$\D$ onto $\D$; hence the range contains every reproducing kernel, so it is
dense, and being closed it is all of $H^{2}(\D)$.  Thus $W^{*}$ is a surjective
isometry, hence unitary, and so is $W=(W^{*})^{*}$.  Consequently
\[
  \sum_{n}\bigl|\ip{h}{Wf_n}\bigr|^{2}
  =\sum_{n}\bigl|\ip{W^{*}h}{f_n}\bigr|^{2}
\]
lies between $A\|W^{*}h\|^{2}=A\|h\|^{2}$ and $B\|h\|^{2}$, so $\{Wf_n\}$ is a
frame with the same bounds as $\{f_n\}$; and
$\sum_{n}\ip{h}{Wf_n}Wg_n=WW^{*}h=h$, with the same identity after exchanging
the roles of $\{f_n\}$ and $\{g_n\}$, so $\{Wg_n\}$ is a dual frame of
$\{Wf_n\}$.
\end{proof}

The previous result about the preservation of dual frames in some sense can be viewed as a companion result to the preservation of frames obtained in \cite{manhas2019weighted}. As discussed in the introduction, there are canonical ways of generating both dual and approximate dual frames for a given frame $\{f_{n}\}$. In particular, the use of the frame operator $S$ of frame $\{f_{n}\}$ serves as the primary example to generate the canonical dual frame. One could hope that applying an operation much simpler than the frame operator, such as a WCO, would also produce a dual frame. However, this is not the case in the Hardy space $H^{2}(\mathbb{D})$ as we will see in Theorem \ref{no_dual}, which claims that given a frame $\{f_{n}\}$ in $H^{2}(\mathbb{D})$ with frame operator $S$, if $\{Wf_{n}\}$ is a dual frame for $\{f_{n}\}$ where $W:H^{2}(\mathbb{D})\to H^{2}(\mathbb{D})$ is a bounded WCO,  then $\{f_{n}\}$ and $\{Wf_{n}\}$ must be tight frames, $W = S^{-1}$, and $\{Wf_{n}\}$ is the canonical dual frame of $\{f_{n}\}$.

\begin{thm}\label{no_dual}
    Let $\{f_{n}\}$ be a frame for $H^{2}(\mathbb{D})$ with frame operator $S$ such that $\{Wf_{n}\}$ is a dual frame of $\{f_{n}\}$, where $W:H^{2}(\mathbb{D})\to H^{2}(\mathbb{D})$ is a bounded weighted composition operator. Then $W= \lambda I$ for some $\lambda>0$ and $\{f_{n}\}$ must be tight. Furthermore, $W=S^{-1}$, $\{ Wf_n\}$ is the canonical dual of $\{f_{n}\}$, and $\{ Wf_n\}$ is tight.
\end{thm}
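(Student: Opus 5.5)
The plan is to translate duality into an operator identity and then exploit the special structure of weighted composition operators acting on reproducing kernels. Let $T$ denote the synthesis operator, so that $S=TT^*$. Duality of $\{Wf_n\}$ with $\{f_n\}$ gives, for every $h\in H^2(\D)$,
\begin{equation*}
h=\sum_n\ip{h}{f_n}Wf_n=W\Big(\sum_n\ip{h}{f_n}f_n\Big)=WSh,
\end{equation*}
so $WS=I$. From the other reconstruction formula, $h=\sum_n\ip{h}{Wf_n}f_n=SW^*h$, so $SW^*=I$. Since $S$ is invertible by Lemma~\ref{lemma:frame_operator}, we get $W=S^{-1}$. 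In particular $W$ is bounded, self-adjoint, positive and invertible, with $\sigma(W)\subset[1/B,1/A]$.

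Next we use the kernel action established in the proof of Proposition~\ref{W_on_both}, namely $W^*K_w=\overline{\psi(w)}K_{\varphi(w)}$. Because $W$ is invertible, Theorem~\ref{thm:Gunatillake} shows that $\varphi$ is an automorphism and that $\psi$ is bounded and bounded away from zero. Self-adjointness gives $W^*K_w=WK_w$, that is,
\begin{equation*}
\frac{\overline{\psi(w)}}{1-\overline{\varphi(w)}z}=\frac{\psi(z)}{1-\overline{w}\varphi(z)},\qquad z,w\in\D.
\end{equation*}
The key step is to extract $\varphi=\mathrm{id}$ and $\psi$ constant from this functional equation.

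\begin{itemize}
\item Setting $w=0$ gives $\overline{\psi(0)}/(1-\overline{\varphi(0)}z)=\psi(z)$, so $\psi(z)=\overline{\psi(0)}K_{\varphi(0)}(z)$. Evaluating at $z=0$ shows $\psi(0)$ is real, and $\psi(0)\neq0$.
\item Write $b=\varphi(0)$ and $c=\psi(0)$, and substitute back. After cancelling $c$, the identity becomes
\begin{equation*}
(1-\overline{b}z)(1-\overline{w}\varphi(z))=(1-b\overline{w})(1-\overline{\varphi(w)}z).
\end{equation*}
\item Compare the coefficient of $\overline{w}$ at $z=0$: the left side gives $-\varphi(0)=-b$, while the right side gives $-b$. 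This is consistent but yields no information yet.
\item Differentiate in $z$ and then set $z=0$: this gives $-\overline{b}-\overline{w}\varphi'(0)+\overline{w}\,\overline{b}b=-(1-b\overline{w})\overline{\varphi(w)}$. Hence $\overline{\varphi(w)}$ is affine in $\overline{w}$, so $\varphi(w)=\alpha w+b$.
\item An affine automorphism of $\D$ must be a rotation, so $b=0$ and $\varphi(z)=\alpha z$ with $|\alpha|=1$.
\item With $b=0$, $\psi\equiv c$. The identity then reduces to $\overline{w}\alpha z=\overline{\alpha w}z$, so $\alpha$ is real and hence $\alpha=\pm1$.
\end{itemize}

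This leaves $W=cC_\varphi$ with $\varphi(z)=\pm z$. The case $\varphi(z)=-z$ must be excluded. There $W$ acts on monomials by $Wz^k=c(-1)^kz^k$, which has both signs of eigenvalues. This contradicts positivity $\sigma(W)\subset[1/B,1/A]\subset(0,\infty)$, since $c\neq0$ and $W$ would have the eigenvalues $c$ and $-c$. Therefore $\varphi=\mathrm{id}$ and $W=cI$, and positivity of $W$ forces $\lambda:=c>0$.

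Finally, $S=W^{-1}=\lambda^{-1}I$, so $\{f_n\}$ is tight with bound $1/\lambda$. Moreover, $\{Wf_n\}=\{S^{-1}f_n\}$ is the canonical dual, and its frame operator is $\lambda^2S=\lambda I$, so it is tight as well. The main obstacle is the functional-equation step. If differentiation turns out to be messy, I would instead compare power series coefficients in $z$ and $\overline{w}$ directly, using the fact that the equation is polynomial of low degree in $z$ and $\overline{w}$ once the kernels are cleared.
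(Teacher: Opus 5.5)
Your proposal has a genuine gap. It lies in the step where you conclude that $\overline{\varphi(w)}$ is affine in $\overline{w}$, and that step carries the whole theorem. Your differentiated identity reads $(1-b\overline{w})\,\overline{\varphi(w)}=\overline{b}+(\varphi'(0)-|b|^{2})\overline{w}$. What is affine in $\overline{w}$ is therefore the product $(1-b\overline{w})\overline{\varphi(w)}$, not $\overline{\varphi(w)}$ itself.

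Conjugating and comparing first-order coefficients gives $\varphi(w)=\frac{b+\beta w}{1-\overline{b}w}$ with $\beta=\varphi'(0)-|b|^{2}\in\R$. This is the linear fractional form the paper obtains by comparing coefficients of $\overline{w}z^{m}$. Combined with $\varphi$ being an automorphism of $\D$, it leaves not only $\varphi(z)=\pm z$. For every $b\neq0$ it also leaves the involution $\varphi(z)=\frac{b-z}{1-\overline{b}z}$ with $\psi(z)=\frac{c}{1-\overline{b}z}$.

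This pair genuinely satisfies your functional equation: both sides equal $1-\overline{b}z-b\overline{w}+\overline{w}z$. So these operators are self-adjoint, and no manipulation of the self-adjointness equation alone can force $b=0$. Your chain ``affine automorphism, hence rotation, hence $b=0$'' rests on a false premise, and the case $b\neq0$ is never excluded.

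The missing ingredient is to apply positivity of $W$ to this remaining case as well, not only to $\varphi(z)=-z$. In this case $\varphi\circ\varphi=\mathrm{id}$ and $\psi(z)\psi(\varphi(z))=c^{2}/(1-|b|^{2})$, so $W^{2}=\lambda^{2}I$ with $\lambda=|c|(1-|b|^{2})^{-1/2}>0$.

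There are two ways to finish:
\begin{itemize}
\item The paper's route: from $W\geq0$ and uniqueness of positive square roots, $W=\lambda I$. Hence $\varphi=\mathrm{id}$, contradicting $b\neq0$. This argument also disposes of $\varphi(z)=\pm z$ at the same time.
\item Your eigenvalue route: since $\psi$ is nonconstant when $b\neq0$, the functions $1\pm\lambda^{-1}\psi$ are nonzero. They satisfy $W(1\pm\lambda^{-1}\psi)=\pm\lambda(1\pm\lambda^{-1}\psi)$. So $-\lambda<0$ is an eigenvalue of $W$, contradicting $\sigma(W)\subset[1/B,1/A]$.
\end{itemize}

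With either repair inserted, the rest of your proposal goes through. This includes the reduction to $W=S^{-1}$, the kernel identity, the evaluation at $w=0$, the treatment of $\varphi(z)=\pm z$, and the tightness and canonical-dual conclusions.
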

\begin{proof}
For every $f \in H^2(\D)$, boundedness of $W$ gives 
\begin{equation*}
    f = \sum_n \langle f , f_n \rangle Wf_n = W \Big( \sum_n \langle f,f_n\rangle f_n\Big) = WSf,
\end{equation*}
    therefore $WS = I$, where $S$ is the frame operator. Following from Lemma~\ref{lemma:frame_operator}, $S$ is positive invertible, $W= S^{-1}$, $W \geq 0$ and $W = W^*$. For $w\in\D$ and $f\in H^{2}(\D)$,
\[
  \ip{f}{W^{*}K_w}=\ip{Wf}{K_w}=(Wf)(w)=\psi(w)f(\varphi(w))
   =\bigl\langle f,\ \overline{\psi(w)}K_{\varphi(w)}\bigr\rangle ,
\]
so $W^{*}K_w=\overline{\psi(w)}K_{\varphi(w)}$.  Comparing this with
$WK_w(z)=\psi(z)K_w(\varphi(z))$ and using $W=W^{*}$ gives
$\psi(z)K_w(\varphi(z))=\overline{\psi(w)}K_{\varphi(w)}(z)$, that is,
\begin{equation}\label{eq:3.2.1}
  \psi(z)\bigl(1-\overline{\varphi(w)}z\bigr)
  =\overline{\psi(w)}\bigl(1-\overline{w}\varphi(z)\bigr),
  \qquad z,w\in\D .
\end{equation}
Setting $w=0$ in \eqref{eq:3.2.1} and
using $K_0\equiv1$ gives $\psi(z)(1-\overline{\varphi(0)}z)=\overline{\psi(0)}$, and
then $z=0$ gives $\psi(0)=\overline{\psi(0)}$.  Thus $\psi(0)$ is real.

We claim that $\psi(0) >0$. In fact, since $W$ is invertible, $\psi\in H^{\infty}(\mathbb{D})\subset H^{2}(\mathbb{D})$. Applying $W$ to the constant function $1$ yields
\begin{equation*}
    0\leq \ip{W1}{1} =\ip{\psi}{1}=  \ip{\psi}{K_0} = \psi(0),  
\end{equation*}
where the first inequality follows since $W\geq 0$. Notice that the invertibility of $W$ (by Theorem~\ref{thm:Gunatillake}) implies that $|\psi|$ is bounded away from zero in $\D$.  Hence,
\begin{equation}\label{eq:3.2.2}
  \psi(z)=\frac{\psi(0)}{1-\overline{\varphi(0)}z},\qquad z\in\D .
\end{equation}
Substituting \eqref{eq:3.2.2} into \eqref{eq:3.2.1}, using
$\overline{\psi(w)}=\psi(0)\bigl(1-\varphi(0)\overline{w}\bigr)^{-1}$ and cancelling
$\psi(0)\neq0$, we obtain
\begin{equation}\label{eq:3.2.3}
  \bigl(1-\varphi(0)\overline{w}\bigr)\bigl(1-\overline{\varphi(w)}z\bigr)
  =\bigl(1-\overline{\varphi(0)}z\bigr)\bigl(1-\overline{w}\varphi(z)\bigr),
  \qquad z,w\in\D .
\end{equation}
Write $\varphi(z)=\sum_{k\ge0}a_kz^{k}$, so that $a_0=\varphi(0)$ and
$\overline{\varphi(w)}=\sum_{k\ge0}\overline{a_k}\,\overline{w}^{\,k}$, and
compare coefficients of $\overline{w}z^{m}$ on the two sides of
\eqref{eq:3.2.3}.  For $m=1$ this gives $a_1=\overline{a_1}$, so $a_1\in\R$; for
$m\ge2$ it gives $a_{m}=\overline{\varphi(0)}\,a_{m-1}$.  Hence
\begin{equation}\label{eq:3.2.4}
  \varphi(z)=\varphi(0)+\frac{a_1z}{1-\overline{\varphi(0)}z}
           =\frac{\varphi(0)+\beta z}{1-\overline{\varphi(0)}z},
  \qquad \beta:=a_1-|\varphi(0)|^{2}\in\R .
\end{equation}
 Since $W$ is invertible, $\varphi$ is an automorphism of $\D$ by Theorem~\ref{thm:Gunatillake}, so
$\varphi(z)=e^{i\theta}(a-z)(1-\overline{a}z)^{-1}$ for some $a\in\D$ and
$\theta\in\R$. The numerator and denominator in
\eqref{eq:3.2.4} have no common zero, since a common zero would force
$\beta=-|\varphi(0)|^{2}$ and hence $\varphi\equiv \varphi(0)$, which is not an automorphism.
The two representations therefore agree up to a scalar, so
$\overline{a}=\overline{\varphi(0)}$ and $\varphi(0)=e^{i\theta}a$.  If $\varphi(0)\neq0$
this forces $e^{i\theta}=1$, so
\[
  \varphi(z)=\frac{\varphi(0)-z}{1-\overline{\varphi(0)}z};
\]
if $\varphi(0)=0$ then \eqref{eq:3.2.4} gives $\varphi(z)=\beta z$ with $\beta$ real and,
$\varphi$ being an automorphism, $|\beta|=1$, so $\varphi(z)=\pm z$.  In every
case $\varphi\circ\varphi=I$. Consequently
$$W^{2}f(z)=\psi(z)\psi(\varphi(z))f(\varphi(\varphi(z)))
 =\psi(z)\psi(\varphi(z))f(z),$$
 with \eqref{eq:3.2.2},
\[
  \psi(z)\psi(\varphi(z))
  =\frac{\psi(0)}{1-\overline{\varphi(0)}z}\cdot
   \frac{\psi(0)\,(1-\overline{\varphi(0)}z)}{1-|\varphi(0)|^{2}}
  =\frac{\psi(0)^{2}}{1-|\varphi(0)|^{2}}=:\lambda^{2},
\]
with $\lambda:=\psi(0)\bigl(1-|\varphi(0)|^{2}\bigr)^{-1/2}>0$.  Thus $W^{2}=\lambda^{2}I$.
Since $W\ge0$, $W$ is a positive square root of $\lambda^{2}I$, and positive
square roots are unique, so $W=\lambda I$. Since $W = S^{-1}$, then $S = \frac{1}{\lambda}I$, which implies $\{f_n\}$ is a tight frame. Applying this to $f\equiv1$ gives
$\psi\equiv\lambda$, and to $f(z)=z$ gives $\lambda\varphi(z)=\lambda z$, so
$\varphi(z)=z$. Since $W = S^{-1}=\lambda I$, $\{ Wf_n\}$ is the canonical dual frame $\{ S^{-1}f_n\}$ of the frame $\{f_n\}$. Since $\{f_n\}$ is a tight frame with frame operator $S = \frac{1}{\lambda}I$, its canonical dual frame $\{ S^{-1}f_n\}$ is also a tight frame with frame operator $\lambda I$.
\end{proof}
\begin{remark}\label{remark:canonical}
Dual frames are far from unique if the given frame is redundant. The situation changes completely, however, once the dual is required to be the image of the frame under a single bounded operator.  Indeed, if $T$ is a bounded operator on $\mathcal H$ such that $\{Tf_n\}$ is a dual frame of the frame $\{f_n\}$, then
for every $f\in\mathcal H$,
\[
  f=\sum_{n}\ip{f}{f_n}Tf_n=T\Bigl(\sum_{n}\ip{f}{f_n}f_n\Bigr)=TSf ,
\]
so $TS=I$ and hence $T=S^{-1}$. In particular $\{Tf_n\}=\{S^{-1}f_n\}$ is the
canonical dual frame. Hence, Theorem~\ref{no_dual} shows that if $\{ Wf_n\}$ is a dual frame of $\{ f_n\}$, then $\{ f_n\}$ must be tight and $\{ Wf_n\}$ must be the canonical dual frame.
\end{remark}

While tightness is forced by the WCO duality in Theorem \ref{no_dual}, one could hope that relaxing duality would result in obtaining a class of WCOs for which $\{Wf_{n}\}$ is an $\varepsilon$-approximate dual frame to $\{f_{n}\}$. Unfortunately, as we will see with our next main result in Theorem \ref{approx_dual}, this is essentially impossible without forcing the WCO to be a multiplication operator. First, we require the following lemma on linear operators, whose proof we include for the sake of completeness. 
 
\begin{lem}\label{invert_lem}
    Let $\mathcal{H}$ be a Hilbert space. Suppose $T:\mathcal{H}\to\mathcal{H}$ is a bounded operator such that $\norm{I-T}<1$. Then 
    \begin{itemize}
        \item[1.] There is a $c>0$ such that $\text{Re} \ip{Tx}{x} \geq c\norm{x}^{2}$ for each $x\in\mathcal{H}$. 
        \item[2.] There is a constant $C>0$ such that $T+T^{*}\geq CI$, in the sense that for each $x\in\mathcal{H}$, $\ip{(T+T^{*})x}{x} \geq C\norm{x}^{2}$.
    \end{itemize}
\end{lem}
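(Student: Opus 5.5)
Set $\delta:=\norm{I-T}<1$ and write $T=I-(I-T)$. Both parts then reduce to the Cauchy--Schwarz inequality, so the plan is simply to push the norm bound on $I-T$ into lower bounds on $\ip{Tx}{x}$ and on $\ip{(T+T^{*})x}{x}$.

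For part 1, fix $x\in\mathcal{H}$ and expand
\[
\ip{Tx}{x}=\norm{x}^{2}-\ip{(I-T)x}{x}.
\]
By Cauchy--Schwarz,
\[
\bigl|\ip{(I-T)x}{x}\bigr|\leq\norm{I-T}\,\norm{x}^{2}=\delta\norm{x}^{2}.
\]
Since $\text{Re}\,z\leq|z|$ for any complex $z$, it follows that
\[
\text{Re}\ip{Tx}{x}\geq\norm{x}^{2}-\delta\norm{x}^{2}=(1-\delta)\norm{x}^{2}.
\]
So $c=1-\delta>0$ works.

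For part 2, note that
\[
\ip{(T+T^{*})x}{x}=\ip{Tx}{x}+\ip{x}{Tx}=\ip{Tx}{x}+\overline{\ip{Tx}{x}}=2\,\text{Re}\ip{Tx}{x}.
\]
This quantity is real, and by part 1 it is at least $2(1-\delta)\norm{x}^{2}$. Hence $C=2(1-\delta)$ gives $T+T^{*}\geq CI$.

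The only place that needs care is making sure the strict inequality $\norm{I-T}<1$ is what makes both constants positive. There is no real obstacle beyond that: everything follows from Cauchy--Schwarz and the identity $\ip{x}{Tx}=\overline{\ip{Tx}{x}}$.
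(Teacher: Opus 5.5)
Your argument is correct, but it extracts the estimate slightly differently from the paper. The paper squares $\norm{(I-T)x}\le k\norm{x}$, expands to $\norm{x}^{2}-2\,\text{Re}\ip{Tx}{x}+\norm{Tx}^{2}\le k^{2}\norm{x}^{2}$, and then discards $\norm{Tx}^{2}$, which yields $c=(1-k^{2})/2$ and $C=1-k^{2}$. You instead apply Cauchy--Schwarz directly to $\ip{(I-T)x}{x}$, which amounts to saying that the numerical range of $T$ lies in the closed disc of radius $\delta$ about $1$.

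This is one step shorter and gives $c=1-\delta$ and $C=2(1-\delta)$. These constants are strictly larger than the paper's for every $\delta\in[0,1)$, and they are best possible in terms of $\delta$ alone, as $T=(1-\delta)I$ shows. The paper's loss comes entirely from dropping $\norm{Tx}^{2}$: keeping that term and using $\norm{Tx}\ge(1-k)\norm{x}$ would recover your constant.

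For the paper's downstream use in Lemma~\ref{spec_lem} and Theorem~\ref{approx_dual}, only the positivity of $C$ matters, so either version suffices.
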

\begin{proof}
    Let $k=\norm{I-T}$. For each $x\in\mathcal{H}$, 
    \begin{align*}
        \norm{(I-T)x}\leq k\norm{x},
    \end{align*}
    and squaring this gives $\norm{(I-T)x}^{2}\leq k^{2}\norm{x}^{2}$. Expanding $\norm{(I-T)x}^{2}$, we have 
    \begin{align*}
        \norm{(I-T)x}^{2}=\norm{x}^{2}-2\text{Re}\ip{Tx}{x}+\norm{Tx}^{2}.
    \end{align*}
  Hence 
    \begin{align*}
        \norm{x}^{2}-2\text{Re}\ip{Tx}{x}+\norm{Tx}^{2}\leq k^{2}\norm{x}^{2},
    \end{align*}
    and 
    \begin{align*}
        (1-k^{2})\norm{x}^{2}\leq(1-k^{2})\norm{x}^{2}+\norm{Tx}^{2}\leq 2\text{Re}\ip{Tx}{x}.
    \end{align*}
    Letting $c:=\frac{1-k^{2}}{2}>0$, we have that $\text{Re}\ip{Tx}{x}\geq c\norm{x}^{2}$ for each $x\in\mathcal{H}$.
    For the second part, note that 
    \begin{align*}
        \text{Re}\ip{Tx}{x}=\frac{\ip{Tx}{x}+\overline{\ip{Tx}{x}}}{2}=\frac{\ip{(T+T^{*})x}{x}}{2}. 
    \end{align*}
   Then for each $x\in\mathcal{H}$, 
   $$\ip{(T+T^{*})x}{x} = 2\text{Re}\ip{Tx}{x}\geq  C\norm{x}^2,$$
   where $C=1-k^{2}>0$.
\end{proof}

Note that $\{Wf_{n}\}$ is an $\varepsilon$-approximate dual frame to the frame $\{f_{n}\}$ with frame operator $S$ if and only if $\norm{I-WS}\leq \varepsilon$. Indeed, for any $f \in H^{2}(\mathbb{D})$,
\begin{equation*}
    \|f -   S_{f_n,Wf_n}f  \| = \|f -  \sum_{n=1}^{\infty}\ip{f}{f_{n}}Wf_{n}\| =  \|f -  WSf\|.
\end{equation*}
Using this in the context of Lemma \ref{invert_lem} gives useful restrictions on the spectrum of such $W$, which leads to our next lemma. 

\begin{lem}\label{spec_lem}
    Let $0 \leq \varepsilon<1$, and suppose $\{f_{n}\}$ is a frame for $H^{2}(\mathbb{D})$ with frame bounds $0<A\leq B<\infty$ and frame operator $S$. Let $W=W_{\psi,\varphi}:H^{2}(\mathbb{D})\to H^{2}(\mathbb{D})$ be a bounded WCO such that $\{Wf_{n}\}$ is an $\varepsilon$-approximate dual frame to $\{f_{n}\}$. Then 
    \begin{itemize}
        \item[1.] $W$ is invertible
        \item[2.] $\psi\in H^{\infty}(\mathbb{D})$,
        \item[3.] $1/\psi\in H^{\infty}(\mathbb{D})$,
        \item[4.] $\varphi(z)$ is a disc automorphism,
        \item[5.] there is a constant $c>0$ such that $\sigma(W)\subset\{\lambda\in\mathbb{C}:\text{Re} \lambda>c\}$.
    \end{itemize}
\end{lem}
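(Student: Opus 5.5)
The plan is to work throughout with the operator $T:=WS$. By the remark just before the lemma, the hypothesis is exactly $\norm{I-T}\leq\varepsilon<1$.

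\emph{Items 1--4.} Since $\norm{I-T}<1$, the Neumann series $\sum_{k\ge0}(I-T)^k$ converges and inverts $T$, so $T=WS$ is invertible. By Lemma~\ref{lemma:frame_operator}, $S$ is invertible with bounded inverse. Hence $W=TS^{-1}$ is a product of invertible operators and is therefore invertible, which is item 1. Theorem~\ref{thm:Gunatillake} then gives the rest at once: $\psi$ is bounded, so $\psi\in H^\infty(\D)$; $\psi$ is bounded away from zero, so $1/\psi$ is a bounded holomorphic function on $\D$, i.e.\ $1/\psi\in H^\infty(\D)$; and $\varphi$ is a disc automorphism. This settles items 2, 3 and 4.

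\emph{Item 5.} This is the only step needing an idea. The issue is that Lemma~\ref{invert_lem} controls the numerical range of $T=WS$, not of $W$ itself. I would get around this by a similarity with $S^{1/2}$, which exists because $S$ is positive and invertible. Consider
\[
  \widetilde W:=S^{1/2}WS^{-1/2}=S^{-1/2}(WS)S^{-1/2}=S^{-1/2}TS^{-1/2}.
\]
Similar operators have the same spectrum, so $\sigma(W)=\sigma(\widetilde W)$. For $x\in H^2(\D)$, put $y=S^{-1/2}x$. By Lemma~\ref{invert_lem}(1) there is $c_0>0$ with
\[
  \text{Re}\ip{\widetilde Wx}{x}=\text{Re}\ip{Ty}{y}\geq c_0\norm{y}^2 .
\]
Since $\sigma(S)\subset[A,B]$, we have $\norm{x}=\norm{S^{1/2}y}\le B^{1/2}\norm{y}$, and hence
\[
  \text{Re}\ip{\widetilde Wx}{x}\geq \frac{c_0}{B}\norm{x}^2 .
\]
So the numerical range of $\widetilde W$ lies in the closed half-plane $\{\text{Re}\,\lambda\ge c_0/B\}$. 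The spectrum of a bounded operator on a Hilbert space is contained in the closure of its numerical range. Therefore $\sigma(W)=\sigma(\widetilde W)\subset\{\text{Re}\,\lambda\geq c_0/B\}$. Taking $c:=c_0/(2B)$ gives the strict inclusion $\sigma(W)\subset\{\lambda:\text{Re}\,\lambda>c\}$ required in item 5.

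The main obstacle is exactly this passage from $WS$ to $W$. The natural first attempt, bounding $\text{Re}\ip{Wx}{x}$ directly, fails because $W$ and $S$ need not commute. The symmetric similarity by $S^{1/2}$ is what makes it work, and the only external fact needed is the containment of the spectrum in the closed numerical range.
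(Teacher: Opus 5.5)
Your proof is correct and follows essentially the paper's route: a Neumann series argument plus Gunatillake's theorem for items 1--4, and for item 5 the similarity of $W$ with $S^{-1/2}(WS)S^{-1/2}$ combined with Lemma~\ref{invert_lem} and the containment of the spectrum in the closed numerical range. Your substitution $y=S^{-1/2}x$ is just the vector form of the paper's conjugation of $T+T^{*}\ge CI$ by $S^{-1/2}$. One small slip: $S^{-1/2}(WS)S^{-1/2}$ equals $S^{-1/2}WS^{1/2}$, not $S^{1/2}WS^{-1/2}$; it is still similar to $W$, so the conclusion is unaffected.
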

\begin{proof}
    Since $\{Wf_{n}\}$ is an $\varepsilon$-approximate dual, we have that $\norm{I-WS}\leq\varepsilon<1$. Hence $WS$ is invertible, and by the invertibility of $S$, $W$ is invertible. The first four points of the lemma then follow by an application of Theorem \ref{thm:Gunatillake}. For the last point, by letting $T=WS$ in the second part of Lemma \ref{invert_lem}, there is a constant $C>0$ such that 
    \begin{align*}
        T+T^{*}\geq CI.
    \end{align*}
    Recalling that $S$ is positive and invertible, by conjugating this expression by $S^{-1/2}$, we obtain 
    \begin{align*}
        S^{-1/2}\left(T+T^{*}\right)S^{-1/2}\geq CS^{-1}\geq (C/B)I.
    \end{align*}
    Using the first part of Lemma \ref{invert_lem} on the operator $S^{-1/2}TS^{-1/2}$, we have that 
    \begin{align*}
        \text{Re}\ip{S^{-1/2}TS^{-1/2}f}{f}\geq (C/B)\norm{f}_{H^{2}}^{2}.
    \end{align*}
    By the Spectral Inclusion Theorem \cite{lax2014functional}, the spectrum of an operator is contained in the closure of its numerical range. This implies that $\sigma(S^{-1/2}TS^{-1/2})\subset\{\lambda \in \mathbb{C}: \text{Re}\lambda\geq (C/B)\}$. However, note that 
    \begin{align*}
        S^{-1/2}TS^{-1/2}=S^{-1/2}WSS^{-1/2}=S^{-1/2}WS^{1/2}.
    \end{align*}
    Hence, $W$ is similar to $S^{-1/2}TS^{-1/2}$ and $\sigma(W)\subset\{\lambda \in \mathbb{C}: \text{Re}\lambda\geq (C/B)\}$. Taking any $0<c<C/B$ gives the claimed conclusion.
\end{proof}
Before proceeding onto the main result of this section, we require the following definition which classifies the class of disc automorphisms into three separate classes. 
\begin{definition}[\cite{gunatillake2011invertible}]
    Let $\varphi:\mathbb{D}\to\mathbb{D}$ be a disc automorphism. We say that 
    \begin{itemize}
        \item[1.] $\varphi$ is \textit{elliptic} if $\varphi$ has a fixed point in $\mathbb{D}$;
        \item[2.] $\varphi$ is \textit{parabolic} if $\varphi$ has only one fixed point on $\mathbb{T}$;
        \item[3.] $\varphi$ is \textit{hyperbolic} if $\varphi$ has two distinct fixed points on $\mathbb{T}$. 
    \end{itemize}
\end{definition}
In the case of hyperbolic automorphisms, the fixed point $\xi$ for which $\varphi'(\xi)<1$ is called the \textit{Denjoy-Wolff point}. 
The spectral theory of invertible WCOs on the Hardy space has been well characterized \cite{gunatillake2011invertible}. As these results will become central to our next main result, we also restate these results here. 

\begin{thm}[Theorems 3.1.1, 3.2.1, 3.3.1, and 3.5.1 of \cite{gunatillake2011invertible}]\label{spectrum_WCO}
    Let $W_{\psi,\varphi}:H^{2}(\mathbb{D})\to H^{2}(\mathbb{D})$ be a bounded weighted composition operator on $H^{2}(\mathbb{D})$. 
    \begin{itemize}
        \item[1.] Suppose $\varphi:\mathbb{D}\to\mathbb{D}$ is an elliptic automorphism with fixed point $a$ such that $\lambda:=\varphi'(a)=e^{2\pi i/n}$ for some $n\in\mathbb{N}$, and let $\phi(z)=(a-z)/(1-\overline{a}z)$. Then the spectrum of $W_{\psi,\varphi}$ is the closure of the set 
        \begin{align*}
            \{\mu:\mu^{n}=\psi\circ\phi(\xi)\psi\circ\phi(\lambda\xi)\cdots\psi\circ\phi(\lambda^{n-1}\xi),\xi\in\mathbb{D}\}.
        \end{align*}
        \item[2.] Suppose $\varphi:\mathbb{D}\to\mathbb{D}$ is an elliptic automorphism with fixed point $a$ such that $\varphi'(a)=e^{2\pi i\theta}$ with $\theta$ irrational and $\psi:\mathbb{D}\to\mathbb{C}$ is continuous on $\overline{\mathbb{D}}$ and bounded below. Then the spectrum of $W_{\psi,\varphi}$ is the circle centered at the origin of radius $\psi(a)$. 
        \item[3.] Suppose $\varphi:\mathbb{D}\to\mathbb{D}$ is a parabolic automorphism with fixed point $e^{i\theta}$, and $\psi$ is continuous on $\overline{\mathbb{D}}$ and bounded below. Then the spectrum of $W_{\psi,\varphi}$ is the circle centered at the origin of radius $\psi(e^{i\theta})$. 
        \item[4.] Suppose $\varphi$ is a hyperbolic automorphism with Denjoy-Wolff point $\xi$ and secondary fixed point $\xi'$. Let $\psi$ be continuous on $\overline{\mathbb{D}}$, $\psi'$  bounded on $\mathbb{D}$, and $\psi$ bounded below.  
        \begin{itemize}
            \item[a.] If $\psi(\xi)/\psi(\xi')>1$, then the point spectrum of $W_{\psi,\varphi}$ contains the two annuli
            \begin{align*}
                \{\lambda \in \mathbb{C}:|\psi(\xi)|\sqrt{\varphi'(\xi)}<|\lambda|<|\psi(\xi)|\sqrt{\left(\varphi'(\xi)\right)^{-1}}\}
            \end{align*}
            and
            \begin{align*}
                \{\lambda \in \mathbb{C}:|\psi(\xi')|\sqrt{\varphi'(\xi)}<|\lambda|<|\psi(\xi')|\sqrt{\left(\varphi'(\xi)\right)^{-1}}\}.
            \end{align*}
            \item[b.] If $1<\psi(\xi)/\psi(\xi')<1/\varphi'(\xi)$, then the spectrum of $W_{\psi,\varphi}$ is the annulus
            \begin{align*}
                 \{\lambda \in \mathbb{C}:|\psi(\xi')|\sqrt{\varphi'(\xi)}\leq|\lambda|\leq|\psi(\xi)|\sqrt{\left(\varphi'(\xi)\right)^{-1}}\}.
            \end{align*}
            \item[c.] If $\psi(\xi)/\psi(\xi')<\varphi'(\xi)$, then the point spectrum is empty.
        \end{itemize}
    \end{itemize}
\end{thm}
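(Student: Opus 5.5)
The statement is a compilation of Theorems 3.1.1, 3.2.1, 3.3.1 and 3.5.1 of \cite{gunatillake2011invertible}, so in the paper it is simply cited. If I had to reprove it, I would use one common strategy for all four cases. First, conjugate $W_{\psi,\varphi}$ to a normal form in which $\varphi$ is as simple as possible. Then read off the spectrum from the spectral radii of $W$ and $W^{-1}$, together with a symmetry of the spectrum.

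\emph{Reduction.} For the elliptic cases, let $\phi(z)=(a-z)/(1-\overline{a}z)$; it is an involutive automorphism exchanging $a$ and $0$. Since $C_\phi$ is invertible (Theorem~\ref{thm:Gunatillake}), $C_\phi W_{\psi,\varphi}C_\phi=W_{\psi\circ\phi,\ \phi\circ\varphi\circ\phi}$ is similar to $W$, and $\phi\circ\varphi\circ\phi$ is the rotation $z\mapsto\lambda z$ with $\lambda=\varphi'(a)$. So it suffices to treat $W=M_{h}C_{\lambda z}$ with $h=\psi\circ\phi$. For the parabolic and hyperbolic cases I would conjugate instead to the upper half-plane, via the Hardy space of the half-plane. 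There $\varphi$ becomes a translation $w\mapsto w+i$ (parabolic) or a dilation $w\mapsto rw$ with $r=\varphi'(\xi)<1$ (hyperbolic).

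\emph{Elliptic, $\lambda=e^{2\pi i/n}$.} Iterating gives $W^{n}=M_{P}$, where $P(z)=h(z)h(\lambda z)\cdots h(\lambda^{n-1}z)$, because $C_{\lambda z}^{n}=I$. The spectrum of an analytic Toeplitz operator $M_P$ on $H^2$ is the closure of $P(\mathbb{D})$. By spectral mapping, $\sigma(W)^n=\overline{P(\mathbb{D})}$. It remains to show that $\sigma(W)$ is invariant under multiplication by $n$-th roots of unity. For this, decompose $H^2=\bigoplus_{j=0}^{n-1}H_j$ with $H_j=\overline{\operatorname{span}}\{z^k:k\equiv j \bmod n\}$, and let $D$ act on $H_j$ by multiplication by $\omega^j$. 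Then $P(\lambda z)=P(z)$, and the structure of $W$ gives $DWD^{-1}=\omega W$ for a suitable root of unity $\omega$. Combining these gives exactly the stated set.

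\emph{Elliptic irrational and parabolic.} Here I would show that $r(W)=|\psi(a)|$ (respectively $|\psi(e^{i\theta})|$) and $r(W^{-1})=1/|\psi(a)|$ (respectively $1/|\psi(e^{i\theta})|$). Then $\sigma(W)$ lies in the circle of that radius, and a rotation-invariance argument finishes, since irrational rotations are dense, or one uses a similarity $W\sim e^{it}W$. For the spectral radius, write $W^m=M_{\psi_m}C_{\varphi_m}$ with $\psi_m=\prod_{k<m}\psi\circ\varphi_k$. Since $\|C_{\varphi_m}\|^{1/m}\to1$ for these automorphisms, the radius is governed by $\limsup\|\psi_m\|_\infty^{1/m}$.
\begin{itemize}
\item In the irrational case $\psi$ is continuous and zero-free, so $\log|\psi\circ\phi|$ is continuous on each circle $|z|=\rho$. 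Unique ergodicity of the irrational rotation makes the Birkhoff averages converge uniformly to the circle mean. That mean equals $\log|\psi(a)|$ by harmonicity, and the value at $\rho=1$ controls the sup.
\item In the parabolic case every orbit converges to the boundary fixed point. Continuity of $\psi$ on $\overline{\mathbb{D}}$ then forces the geometric means to tend to $|\psi(e^{i\theta})|$.
\end{itemize}

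\emph{Hyperbolic.} This is where I expect the main obstacle. In the dilation model, I would construct eigenfunctions of the form $w^{s}\,g(w)$, where $g$ is obtained by solving the functional equation $\psi(w)f(rw)=\mu f(w)$ as a convergent infinite product; the hypothesis that $\psi'$ is bounded is what makes that product converge. Membership in the half-plane Hardy space imposes a range on $\operatorname{Re} s$, and this range yields the two annuli in (a). For (b) one also needs matching upper bounds for $r(W)$ and $r(W^{-1})$, using the orbit behaviour near $\xi$ and $\xi'$. For (c), one shows that the growth conditions imposed at the two fixed points on a putative eigenfunction are incompatible, so no eigenvalue exists. The delicate part throughout is the uniform estimate on the products $\psi_m$ near the two boundary fixed points.
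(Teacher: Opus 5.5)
The paper gives no proof here; it simply quotes Gunatillake. Your sketch therefore has to stand on its own. It has a genuine gap at the step that upgrades information about $W^n$, or about spectral radii, to the full spectrum.

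\textbf{Rational elliptic case.} Your grading operator $D$, which acts as the scalar $\omega^j$ on $H_j$, is just $C_{\omega z}$. It commutes with $C_{\lambda z}$ and conjugates $M_h$ to $M_{h(\omega\,\cdot)}$, so
$DWD^{-1}=M_{h(\omega\,\cdot)}C_{\lambda z}$.
This equals $\omega' W$ for a scalar $\omega'$ only if $h(\omega z)=\omega' h(z)$, and when $h(0)\neq 0$ that forces $\omega'=1$. For instance, with $h\equiv 1$, $D$ simply commutes with $W$. So spectral mapping only gives $\sigma(W)\subseteq\{\mu:\mu^n\in\overline{P(\D)}\}$.

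The reverse inclusion comes from a different mechanism. Since $W^*K_w=\overline{h(w)}K_{\lambda w}$, for $w\neq 0$ the span of the linearly independent kernels $K_w,K_{\lambda w},\dots,K_{\lambda^{n-1}w}$ is $W^*$-invariant. On it, $W^*$ acts as a weighted cyclic shift with characteristic polynomial $x^n-\overline{P(w)}$. Hence every $n$-th root of $\overline{P(w)}$ is an eigenvalue of $W^*$. Taking closures, and using that $\mu\mapsto\mu^n$ is open, gives the stated set.

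\textbf{Irrational elliptic and parabolic cases.} Your spectral-radius bounds do place $\sigma(W)$ inside the right circle. However, the similarity $W\sim e^{it}W$ you invoke is false. For $h\equiv c$, $W=cC_{\lambda z}$ has point spectrum $\{c\lambda^k:k\geq 0\}$, while $e^{it}W$ has $\{ce^{it}\lambda^k:k\geq 0\}$; these differ for every $t\notin 2\pi\mathbb{Z}$. You also never say what the \emph{rotation-invariance argument} is.

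A one-sided intertwining is enough:
\begin{itemize}
\item In the irrational elliptic case, $WM_z=\lambda M_zW$ with $M_z$ an isometry. This makes the approximate point spectrum invariant under multiplication by $\lambda$, hence under all rotations by density of $\{\lambda^k\}$ and closedness.
\item Since $\sigma(W)$ lies in a circle, $\sigma(W)=\partial\sigma(W)\subseteq\sigma_{\mathrm{ap}}(W)$, so the whole circle follows.
\item Alternatively, $W$ is lower triangular in $\{z^k\}$ with diagonal $h(0)\lambda^k$. So $W^*$ has the eigenvalues $\overline{h(0)\lambda^k}$, which are dense in the circle.
\item In the parabolic case the intertwiners are the singular inner functions $e_t=\exp\bigl(t\frac{z+\zeta}{z-\zeta}\bigr)$, with $\zeta=e^{i\theta}$ and $t>0$. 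They satisfy $e_t\circ\varphi=e^{-ita}e_t$ for some real $a\neq 0$, and $M_{e_t}$ is an isometry commuting with $M_\psi$.
\end{itemize}
A minor slip: in the upper half-plane a parabolic automorphism is a real translation; $w\mapsto w+i$ is not onto.

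\textbf{Hyperbolic case.} This part is only a plan. The following are the actual content of Gunatillake's Theorem 3.5.1, and none of them is carried out:
\begin{itemize}
\item the two-sided estimates on the products $\psi_m$ near $\xi$ and $\xi'$;
\item the Hardy-space membership computation that is supposed to produce the two annuli;
\item the argument that the point spectrum is empty in (c).
\end{itemize}
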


Finally, by combining the different characteristics of the classes of disc automorphisms, the spectral restriction on $W$ coming from Lemma \ref{spec_lem}, and information on the spectrum of WCO for each of these automorphism classes, we arrive at the main result in Theorem \ref{approx_dual} for this section. 

\begin{thm}\label{approx_dual}
    Assume $0\leq \varepsilon< 1$ and $\{f_{n}\}$ is a frame for $H^{2}(\mathbb{D})$ with bounds $0<A\leq B<\infty$. Let $W=W_{\psi,\varphi}:H^{2}(\mathbb{D})\to H^{2}(\mathbb{D})$ be a bounded WCO. 
    \begin{itemize}
        \item[1.] Suppose $\psi$ is continuous on $\overline{\mathbb{D}}$. Then $\{Wf_{n}\}$ is an $\varepsilon$-approximate dual frame to $\{f_{n}\}$ and $\varphi$ is elliptic if and only if $\varphi(z)=z$ and $\{M_{\psi}f_{n}\}$ is an $\varepsilon$-approximate dual frame to $\{f_{n}\}$, where $M_{\psi}$ is the multiplication operator by $\psi$.
        \item[2.] Suppose $\psi$ is continuous on $\overline{\mathbb{D}}$ and $\varphi$ is a parabolic automorphism. Then $\{Wf_{n}\}$ cannot be an $\varepsilon$-approximate dual to $\{f_{n}\}$. 
        \item[3.] Let $\varphi:\mathbb{D}\to\mathbb{D}$ be a hyperbolic automorphism with Denjoy-Wolff point $\xi$ and secondary fixed point $\xi'$. Suppose $\psi$ is continuous on $\overline{\mathbb{D}}$, $\psi'$ is bounded on $\mathbb{D}$, and $\psi$ is bounded below. If either $\psi(\xi)/\psi(\xi')>1$ or $1<\psi(\xi)/\psi(\xi')<1/\varphi'(\xi)$, then $\{Wf_{n}\}$ cannot be an $\varepsilon$-approximate dual frame to $\{f_{n}\}$.
    \end{itemize}
\end{thm}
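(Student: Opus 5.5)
The plan is to combine Lemma~\ref{spec_lem} with the spectral descriptions in Theorem~\ref{spectrum_WCO}. The key observation is that an $\varepsilon$-approximate dual forces $\sigma(W)\subset\{\mathrm{Re}\,\lambda>c\}$ for some $c>0$. Any spectrum that contains a full circle centered at the origin, or an annulus centered at the origin, must meet the left half-plane, and so cannot lie in such a region. Throughout, Lemma~\ref{spec_lem} also gives us that $W$ is invertible, that $\psi$ and $1/\psi$ are bounded, and that $\varphi$ is a disc automorphism. This supplies the ``bounded below'' hypotheses needed in Theorem~\ref{spectrum_WCO}.

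\textbf{Part 2 (parabolic).} By Theorem~\ref{spectrum_WCO}(3), $\sigma(W)$ is a circle centered at $0$ of radius $|\psi(e^{i\theta})|>0$. This circle contains the point $-|\psi(e^{i\theta})|$, which contradicts Lemma~\ref{spec_lem}(5).

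\textbf{Part 3 (hyperbolic).} The two cases are handled separately.
\begin{itemize}
\item In case (b), $\sigma(W)$ is a nondegenerate annulus centered at $0$. It contains negative reals, which gives the same contradiction as in Part 2.
\item In case (a), the point spectrum contains an open annulus centered at $0$. Its inner radius $|\psi(\xi)|\sqrt{\varphi'(\xi)}$ is positive and strictly below the outer radius, since $0<\varphi'(\xi)<1$. This annulus again contains points with negative real part, a contradiction.
\end{itemize}

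\textbf{Part 1 (elliptic).} The reverse direction is immediate: if $\varphi(z)=z$ then $W=M_\psi$, and $\varphi$ is elliptic. For the forward direction, let $a$ be the fixed point and write $\varphi'(a)=e^{2\pi i\theta}$. There are two cases.
\begin{itemize}
\item If $\theta$ is irrational, Theorem~\ref{spectrum_WCO}(2) gives that $\sigma(W)$ is a circle of radius $|\psi(a)|>0$, and Part 2's argument yields a contradiction.
\item If $\theta=1/n$ (more generally, rational of order $n$; I would reduce to the stated form by replacing $\varphi$ with a suitable power or noting the theorem's proof covers primitive roots), Theorem~\ref{spectrum_WCO}(1) describes $\sigma(W)$ as all $n$-th roots of the values $\Psi(\xi)=\prod_{k}\psi\circ\phi(\lambda^k\xi)$. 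This set contains with each $\mu$ the whole orbit $\mu e^{2\pi i j/n}$. For $n\ge 3$ these $n$ points cannot all have positive real part, since $n$ equally spaced points on a circle are not contained in an open half-plane through $0$. For $n=2$ the pair $\pm\mu$ also fails. Since $\Psi$ is nonvanishing (as $1/\psi$ is bounded), $\mu\neq0$.
\end{itemize}
Hence $n=1$, so $\varphi'(a)=1$. An automorphism with an interior fixed point is conjugate to a rotation by $\varphi'(a)$, so it follows that $\varphi=\mathrm{id}$, and $W=M_\psi$ inherits the approximate duality.

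I expect the main obstacle to be the rational elliptic case. Care is needed there to ensure Theorem~\ref{spectrum_WCO}(1) applies to every rational rotation angle, including non-primitive exponents $e^{2\pi i m/n}$. The same care is needed to confirm that the rotational symmetry of the root set genuinely forces a point into $\{\mathrm{Re}\,\lambda\le 0\}$. If the cited theorem only covers $e^{2\pi i/n}$, I would conjugate by $\phi$ to reduce to $\varphi(z)=e^{2\pi i m/n}z$. I would then either argue directly that the spectrum is invariant under multiplication by $e^{2\pi i/n}$, or pass to the operator $W^{k}$ with $km\equiv1 \pmod n$. The latter is again a WCO, now with rotation $e^{2\pi i/n}$, and its spectrum is $\sigma(W)^k$ by the spectral mapping theorem. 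I would check that this power map still leads to the half-plane contradiction.
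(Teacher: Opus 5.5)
Your argument is the paper's argument. Lemma \ref{spec_lem}(5) confines $\sigma(W)$ to $\{\mathrm{Re}\,\lambda>c\}$, and in each case Theorem \ref{spectrum_WCO} produces spectral points with $\mathrm{Re}\,\lambda\le 0$. In Part 1 the paper simply cites parts 1 and 2 of Theorem \ref{spectrum_WCO} for every non-identity elliptic $\varphi$. So your remark that part 1, as restated, covers $\varphi'(a)=e^{2\pi i/n}$ but not $e^{2\pi i m/n}$ with $m\neq 1$ is a legitimate point, and the paper passes over it.

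The concrete fallback you propose for that case does not work as stated. Passing to $W^{k}$ with $km\equiv 1\pmod n$ and using $\sigma(W^k)=\sigma(W)^k$ gives no contradiction on its own. Invariance of $\sigma(W)^k$ under rotation by $e^{2\pi i/n}$ is compatible with $\sigma(W)$ lying in a right half-plane: for $n=5$, $k=3$, the set $\{e^{2\pi i j/15}:|j|\le 3\}$ lies in $\{\mathrm{Re}\,\lambda>0.3\}$, yet its cube is the full set of fifth roots of unity.

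A self-contained fix uses the identity $W^*K_w=\overline{\psi(w)}K_{\varphi(w)}$ from the proof of Proposition \ref{W_on_both}. It is valid for every primitive root and does not even need continuity of $\psi$.
\begin{itemize}
\item Suppose $\varphi$ has exact order $n\ge 2$ and $z\neq a$. The orbit points $z_j=\varphi_j(z)$, $0\le j<n$, are distinct, because a non-identity elliptic automorphism fixes no point of $\D$ other than $a$.
\item Hence the kernels $K_{z_j}$ are linearly independent. They span a $W^*$-invariant subspace on which $W^*$ acts as the weighted cyclic shift $K_{z_j}\mapsto\overline{\psi(z_j)}K_{z_{j+1}}$ (indices mod $n$).
\item The characteristic polynomial of this shift is $\mu^n-\overline{\Psi(z)}$, where $\Psi(z)=\prod_{j}\psi(z_j)$.
\item Since $\sigma(W)$ is the complex conjugate of $\sigma(W^*)$, every $n$-th root of $\Psi(z)$ lies in $\sigma(W)$.
\item These roots sum to $0$, so one of them has nonpositive real part, contradicting Lemma \ref{spec_lem}(5).
\end{itemize}
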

\begin{proof}
    We begin with the first part. Note that the backward direction is trivial as $\varphi(z)=z$ gives $\{W_{\psi, \varphi} f_n\}= \{M_{\psi}f_n\}$ as an $\varepsilon$-approximate dual to $\{f_n\}$, and also follows from Theorem~\ref{spectrum_WCO}, with $\varphi(z)$ being the identity map and thus elliptic. 
    For the forward direction, suppose $\varphi(z)\neq z$. By Lemma \ref{spec_lem} we have that $W$ is invertible and $\sigma(W)\subset\{\lambda \in \mathbb{C}: \text{Re}\lambda\geq (C/B)\}$ for some constant $C>0$. However, under the elliptic and non-identity supposition, parts $1$ and $2$ of Theorem \ref{spectrum_WCO} imply that either 
    \begin{itemize}
        \item[(a)] the spectrum of $W$ is the closure of 
        \begin{align*}
            \{\mu:\mu^{n}=(\psi\circ\phi(\xi))(\psi\circ\phi(\lambda\xi))\cdots(\psi\circ\phi(\lambda^{n-1}\xi)),\xi\in\mathbb{D}\}
        \end{align*}
        where $\lambda=e^{2\pi i/n}$, $a$ is the fixed point of $\varphi$, and $\phi(z)=(a-z)/(1-\overline{a}z)$ or,
        \item[(b)] the spectrum of $W$ is a circle centered at the origin of radius $|\psi(\xi)|$, where $\xi$ is the fixed point of $\varphi$.
    \end{itemize}
    In either case, picking $\lambda$ with $\text{Re}\lambda=0$ produces a contradiction. Hence $\varphi(z)=z$ and the $\varepsilon$-approximate dual frame condition reduces to an $\varepsilon$-approximate dual frame condition on $M_{\psi}$. 
    
    For the second part note that if $\{Wf_{n}\}$ were an $\varepsilon$-approximate dual frame to $\{f_{n}\}$, Lemma \ref{spec_lem} implies that $\sigma(W)\subset\{\lambda \in \mathbb{C}: \text{Re}\lambda\geq (C/B)\}$. However, the third part of Theorem \ref{spectrum_WCO} under the supposition implies that $\sigma(W)$ is a circle centered at the origin of radius $|\psi(e^{i\theta})|$ where $e^{i\theta}$ is the fixed point of $\varphi$. Taking $\lambda\in\sigma(W)$ with $\text{Re}\lambda=0$ gives the contradiction. 

    Similarly for the third part, the final portion of Theorem \ref{spectrum_WCO} implies that either the point spectrum contains two annuli, or that the spectrum is an annulus itself. In either case, taking $\lambda$ with $\text{Re}\lambda=0$ gives a contradiction to Lemma \ref{spec_lem}. Thus, $\{Wf_{n}\}$ cannot be an $\varepsilon$-approximate dual frame to $\{f_{n}\}$. 
\end{proof}

The above result essentially reduces the hope of obtaining WCOs which produce approximate duals to considering only the multiplication operator, at least under mild conditions on the multiplication operator. In light of Theorem \ref{no_dual}, one would hope that producing an approximate dual via a WCO would imply that the underlying frame $\{f_{n}\}$ must be some weakened notion of a tight frame. The standard form of relaxation is through approximately tight frames. For convenience of readers, the definition of approximately tight frame is given here.
\begin{definition}
    Let $\mathcal{H}$ be a separable Hilbert space and $\{f_{n}\}$ be a frame for $\mathcal{H}$ with frame bounds $0<A\leq B<\infty$. Given $0\leq\delta<1$, we say that $\{f_{n}\}$ is a $\delta$-approximate tight frame if 
    \begin{align*}
        \left|\frac{B}{A}-1\right| \leq \delta. 
    \end{align*}
\end{definition}

However, the following proposition shows that this is not possible.

\begin{prop}\label{thm:hardy_approximate_dual}
    For all $\varepsilon, \delta \in (0,1)$, there exists a frame $\{f_{n}\}$ of $H^{2}(\mathbb{D})$ and a bounded weighted composition operator $W=W_{\psi,\varphi}:H^{2}(\mathbb{D})\to H^{2}(\mathbb{D})$ such that $\{Wf_{n}\}$ is an $\varepsilon$-approximate dual frame to $\{f_{n}\}$, but $\{f_{n}\}$ is not a $\delta$-approximate tight frame.
\end{prop}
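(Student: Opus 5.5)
The plan is an explicit construction in which the composition symbol is trivial, $\varphi(z)=z$. This is consistent with Theorem \ref{approx_dual}, which already rules out most nontrivial $\varphi$. I would take $W=M_\psi$ and build the frame so that its frame operator is a Toeplitz operator $T_u$ with a positive continuous symbol $u$ on $\T$. Recall that $\{Wf_n\}$ is an $\varepsilon$-approximate dual to $\{f_n\}$ if and only if $\norm{I-WS}\le\varepsilon$. For $\psi\in H^\infty(\D)$ we have $M_\psi T_u=T_\psi T_u=T_{\psi u}$, because $\psi$ is analytic. Hence
\[
\norm{I-WS}=\norm{T_{1-\psi u}}=\norm{1-\psi u}_{L^\infty(\T)} .
\]
The problem therefore reduces to a function-theoretic one: find an analytic $\psi$ that is bounded and bounded below, such that $\psi u$ is uniformly close to $1$ on $\T$, while the ratio $\max u/\min u$ is large.

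The key observation is that $|1-e^{i\alpha}|\le|\alpha|$. So it suffices to have $u=1/|\psi|$ on $\T$ with $|\arg\psi|$ uniformly small, because then $\psi u=e^{i\arg\psi}$. This requires an analytic $\log\psi$ whose imaginary part is small while its real part has large oscillation. I would take, for parameters $t>0$ and $r>0$,
\[
\psi(z)=(1+r-z)^{t}
\]
with the principal branch. On $\overline{\D}$ we have $\operatorname{Re}(1+r-z)\ge r>0$, so $|\arg(1+r-z)|<\pi/2$ and hence $|\arg\psi|<t\pi/2$. Moreover $\psi$ is continuous on $\overline{\D}$, bounded, and bounded below, with $r^t\le|\psi|\le(2+r)^t$.

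The steps are as follows.

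\begin{enumerate}
\item Fix $t$ with $t\pi/2\le\varepsilon$. Then choose $r$ so small that $\bigl((2+r)/r\bigr)^{t}>1+\delta$.
\item Set $u=1/|\psi|$ on $\T$. This is continuous and positive, with $\min u=(2+r)^{-t}$ and $\max u=r^{-t}$, both attained at $z=-1$ and $z=1$ respectively.
\item Define $f_n=T_u^{1/2}e_n$, where $\{e_n\}$ is an orthonormal basis of $H^2(\D)$. Since $T_u$ is positive, $\sum_n|\ip{f}{f_n}|^2=\norm{T_u^{1/2}f}^2=\ip{T_uf}{f}$, so the frame operator is $S=T_u$.
\item Identify the optimal frame bounds as $A=\min\sigma(T_u)$ and $B=\max\sigma(T_u)$. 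By the Hartman–Wintner theorem, $\sigma(T_u)=[\min u,\max u]$ for real continuous $u$. Hence $B/A=((2+r)/r)^t>1+\delta$, so $\{f_n\}$ is a frame that is not $\delta$-approximately tight.
\item Conclude with the Toeplitz norm identity $\norm{T_g}=\norm{g}_\infty$:
\[
\norm{I-M_\psi T_u}=\norm{1-e^{i\arg\psi}}_\infty\le t\pi/2\le\varepsilon .
\]
\end{enumerate}

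The main obstacle is the design in the second paragraph. A naive attempt such as $\psi=1/F$ with $F$ outer and $|F|=u$ only controls moduli; it leaves the phase of $\psi u$ uncontrolled. The point is to choose $\log\psi$ with a uniformly small imaginary part but an arbitrarily large real oscillation. This is possible because boundedness of the harmonic conjugate does not bound the function itself, and the example $\log(1-z)$ near the boundary point $1$ shows this directly. After that, the remaining checks are standard Toeplitz facts: $T_\psi T_u=T_{\psi u}$ for analytic $\psi$, the norm identity $\norm{T_g}=\norm{g}_\infty$, and the spectrum of a Toeplitz operator with real continuous symbol.
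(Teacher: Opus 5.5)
\textbf{There is a genuine gap, in your reduction step.} For $\psi\in H^\infty(\D)$ it is \emph{not} true that $M_\psi T_u=T_{\psi u}$.

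The Brown--Halmos rule $T_fT_g=T_{fg}$ needs the \emph{right} factor $g$ to be analytic or the \emph{left} factor $f$ to be co-analytic. You have the analytic factor on the left. Write $P_+$ for the Szeg\H{o} projection. For $f\in H^2(\D)$,
\[
T_{\psi u}f-M_\psi T_uf=P_+\bigl(\psi\,(I-P_+)(uf)\bigr),
\]
and an analytic function times an anti-analytic one generally has a nonzero analytic part. Already $T_zT_{\bar z}\neq T_1$. By Brown--Halmos, this semicommutator is nonzero whenever $\psi$ and the real symbol $u$ are both nonconstant, which is exactly your situation.

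So $I-WS$ is not the Toeplitz operator $T_{1-\psi u}$. The identity $\norm{T_g}=\norm{g}_\infty$ cannot be used in step 5, and the bound $\norm{I-WS}\le t\pi/2$ is unproved. Taking adjoints does not help, since $(WS)^*=T_uT_{\bar\psi}$ has the co-analytic factor on the right. Steps 2--4 are fine: the frame $T_u^{1/2}e_n$, $S=T_u$, and the use of Hartman--Wintner.

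\textbf{How the paper avoids this.} The paper needs no product rule because it ties $S$ to $W$ algebraically. With $P=\frac12(W+W^*)=T_{\operatorname{Re}\psi}$ and $Q=\frac1{2i}(W-W^*)$, it takes $S=P^{-1}$, so $I-WS=-iQP^{-1}$ exactly. A Riemann map $\psi$ onto $\{1<\operatorname{Re}w<M,\ |\operatorname{Im}w|<\varepsilon\}$ then gives $\norm{I-WS}\le\norm{Q}\norm{P^{-1}}\le\varepsilon$ with $B/A=M$.

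\textbf{Repairing your construction.} Let $h=\psi^{-1/2}$ (principal branch). Then $T_u=T_{\bar h h}=M_h^*M_h$, since now the analytic factor is on the right, and $M_\psi=M_{h^{-2}}$. Hence
\[
I-WS=M_{h^{-2}}(M_h-M_h^*)M_h,\qquad \norm{I-WS}\le 2\norm{\psi}_\infty\norm{h}_\infty\norm{\operatorname{Im}h}_\infty\le 2\sin(t\pi/4)\,\frac{B}{A}.
\]
Since $\delta<1$, it suffices to have $B/A=2$. Fix $t\le\varepsilon/\pi$, then take $r=2/(2^{1/t}-1)$. This gives $((2+r)/r)^t=2$ and $\norm{I-WS}\le 4\sin(t\pi/4)\le\varepsilon$.

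This estimate degrades as $B/A$ grows. To get arbitrarily large $B/A$ with your $\psi$ you need a bound uniform in $r$. One route is the correct identity $(WS-I)f=\psi P_+\bigl(\psi^{-1}(e^{i\arg\psi}-1)f\bigr)$. Combine it with $\norm{M_\psi P_+M_{\psi^{-1}}}\le 1/\cos(t\pi)$ for $t<1/2$, which follows from estimating the angle between $H^2$ and $(\psi/\bar\psi)\overline{H^2_0}$.
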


\begin{proof}
Fix $\varepsilon, \delta \in (0,1)$ and $M > 1+\delta$. We construct a bounded, invertible
weighted composition operator $W$ on $H^2(\mathbb{D})$ and a frame
$\{f_n\}$ with optimal frame bounds $A = \tfrac{1}{M}$ and $B = 1$ such
that $\{W f_n\}$ is an $\varepsilon$-approximate dual of $\{f_n\}$. Since
$M$ is arbitrary, this rules out any form of approximate tightness.

First, we consider a simply connected region $R = \{ x+iy: 1<x<M, |y|<\varepsilon \} \subset \C$. Suppose $\psi: \D \to R$ such that $\psi(\D)=R$ (by the Riemann Mapping Theorem \cite{ahlfors1979complex}, $\psi$ is bijective, holomorphic and unique). It can be readily seen that 
\begin{enumerate}
  \item $1 < \text{Re}{\psi} < M$ and $|\text{Im}\psi| < \varepsilon$ on $\D$.
    \item $\inf_{\D} \text{Re} \psi =1$ and $\sup_{\D} \text{Re} \psi =M$.
\end{enumerate}
Moreover, since $R$ is simply connected and $\psi$ is bounded and holomorphic, $\psi$ has nontangential boundary limits to $\T$. Denote the extension of $\psi$ to $\T$ by $\psi^*$, then on $\T$, $1\leq \text{Re}(\psi^*) \leq M$ and $|\text{Im}(\psi^*)|\leq \varepsilon$ a.e. Now take $W = W_{\psi, \varphi}$ with the above $\psi$ and $\varphi(z) =z$. $W$ is a well-defined weighted composition operator because $||W|| = ||\psi||_{\infty} \leq \sqrt{M^2 + \varepsilon^2}$ (therefore $\psi, \tfrac{1}{\psi} \in H^{\infty}(\D)$). By definition, $W = M_{\psi}$, the multiplicative operator by $\psi$. Moreover, $W$ is invertible with $W^{-1} = M_{1/\psi}$. Decompose $W$ into $W = P +iQ$ where
$$
P : = \frac{1}{2}(W+W^*), \quad Q : = \frac{1}{2i}(W-W^*).
$$
Notice that $P$ and $Q$ are self-adjoint. Moreover, for all $ f \in H^2(\D)$,
$$\langle Pf,f\rangle = \text{Re}\langle \psi f, f\rangle=\int_\T \text{Re}(\psi^*) |f|^2 dm,$$
and
$$\langle Qf,f\rangle = \text{Im}\langle \psi f, f\rangle=\int_\T \text{Im}(\psi^*) |f|^2 dm.$$
\\
Hence the spectrum of $P$ satisfies $\sigma (P)\subset [1,M]$ and $Q$ satisfies $||Q||\leq \varepsilon$.  Let $k_z:=K_z/\|K_z\|$ be the normalized reproducing kernel at $z\in\D$.  Since
$\ip{M_{\psi}k_z}{k_z}=\psi(z)$, we have
\[
  \ip{Pk_z}{k_z}=\text{Re}\ip{M_{\psi}k_z}{k_z}=\text{Re}\psi(z),
  \qquad z\in\D ,
\]
so the numerical range of $P$ contains $\text{Re}\psi(\D)=(1,M)$. Therefore, 
$$\min(\sigma (P))=1, \max(\sigma (P))=M.$$

 Recall that if $V$ is a bounded and invertible operator and $\{ e_n \}$ is an orthonormal basis, then $\{ Ve_n \}$ forms a Riesz basis with frame operator  $VV^*$. Consider $e_n(z) = z^n$, an orthonormal basis of $H^2(\D)$ and $V= P^{-1/2}$. Then $\{f_n \}$, where $f_n = P^{-1/2}z^n$ forms a frame with optimal lower bound $\tfrac{1}{M}$ and optimal upper bound $1$. Moreover, its associated frame operator $S$ satisfies:
 \begin{align*}
     S=VV^{*}=P^{-1}.
 \end{align*}
 
Now we show $\{Wf_n\}$ is indeed an $\varepsilon$-approximate dual frame to $\{f_n\}$. Since $W = P+ iQ$ and $S= P^{-1}$,
\begin{align*}
    ||I -WS || &= || I - (P+iQ)P^{-1}||
    = ||-i QP^{-1}|| \leq \varepsilon,
\end{align*}
where the last inequality is due to $\min(\sigma (P))=1$, $\max(\sigma (P))=M$ and  $||Q|| \leq \varepsilon$. Since $M$ is arbitrarily large, $\{f_n\}$ cannot be a $\delta-$approximate tight frame for any $\delta \in (0,1)$. 
\end{proof}

\section{Duality Preservation on \texorpdfstring{$\mathcal{F}^{2}(\mathbb{C}^{n})$}{Fock space}}\label{sec:dualpreservation_Fock}

In the previous section, we observed that there are several rigidities in terms of allowable WCOs which produce approximately dual frames. However, as we will see, several of these rigidities remain while others are introduced for WCOs on the Bargmann-Fock space $\mathcal{F}^{2}(\mathbb{C}^{n})$. First, we require the following lemma. 
\begin{lem}\label{wco_fock_lem}
    Let $W = W_{\psi,\varphi}$ be a bounded, invertible weighted
composition operator on $\mathcal{F}^2(\mathbb{C}^n)$. Then $W$ is a scalar
multiple of a unitary operator. More precisely, for some $A$ unitary and $b \in \mathbb{C}^n$,
\[
\varphi(z) = Az + b, \qquad
\psi(z) = \psi(0)\, e^{-\langle z, A^*b\rangle},
\]
and $W$ is a constant multiple of an isometry: for every
$f \in \mathcal{F}^2(\mathbb{C}^n)$,
\[
\|Wf\| = r\|f\|,\; \text{where} \; r := |\psi(0)|\, e^{|b|^2/2}.
\]
\end{lem}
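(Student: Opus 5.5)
The plan is to first pin down $\varphi$, then $\psi$, and then verify the isometry constant directly on reproducing kernels.

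\textbf{Step 1: adjoint on kernels.} As in the Hardy case, for $w\in\C^n$ and $f\in\mathcal{F}^2(\C^n)$ we have $\ip{f}{W^*K_w}=\psi(w)f(\varphi(w))$. Hence
\[
W^*K_w=\overline{\psi(w)}K_{\varphi(w)}.
\]

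\textbf{Step 2: $\varphi$ is affine.} Invertibility of $W$ implies that $W^*$ is bounded below, so $\|W^*K_w\|\ge c\|K_w\|$. Since $\|K_w\|^2=e^{|w|^2}$, this reads
\[
|\psi(w)|^2e^{|\varphi(w)|^2}\ge c^2e^{|w|^2}.
\]
Boundedness gives the reverse inequality $|\psi(w)|^2e^{|\varphi(w)|^2}\le\|W\|^2e^{|w|^2}$. This upper bound alone is the known boundedness criterion (Le, Carroll--Gilmore type) forcing $\varphi(z)=Az+b$ with $\|A\|\le1$. I would either invoke that characterization of bounded WCOs on Fock space or reprove the affine part: $\psi\cdot(f\circ\varphi)\in\mathcal F^2$ for $f=K_w$ gives growth constraints that, via a Liouville/Hadamard-type argument, make $\varphi$ of degree at most one.

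\textbf{Step 3: $A$ is unitary.} Apply the same reasoning to $W^{-1}$. I expect $W^{-1}$ is again a WCO with symbol $\varphi^{-1}$, as in Theorem~\ref{thm:Gunatillake}. Surjectivity of $W$ forces $\varphi$ to be injective and onto, because $Wf(z)$ only depends on values of $f$ on $\varphi(\C^n)$, and a proper affine image would leave a nonzero function vanishing there outside the range. Thus $A$ is invertible and $\|A^{-1}\|\le1$, so $A$ is unitary.

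\textbf{Step 4: the form of $\psi$.} Write $\psi=e^{g}$ with $g$ entire; this is possible because $W$ bounded below forces $\psi$ to be zero-free, as the kernel inequality shows. With $A$ unitary, the two-sided inequality becomes
\[
|\psi(w)|^2e^{|Aw+b|^2-|w|^2}=|\psi(w)|^2e^{2\,\mathrm{Re}\ip{w}{A^*b}+|b|^2}\asymp 1 .
\]
So $\mathrm{Re}\bigl(g(w)+\ip{w}{A^*b}\bigr)$ is bounded, and by Liouville $g(w)+\ip{w}{A^*b}$ is constant. The term $\ip{w}{A^*b}$ is conjugate-linear in $w$, so the holomorphic combination should be $\ip{A^*b}{w}$ conjugated appropriately. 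I would fix the convention so that $\psi(z)=\psi(0)e^{-\ip{z}{A^*b}}$ matches the statement; the holomorphic dependence on $z$ dictates which slot carries $z$. This Liouville step is the main obstacle, together with Step 2.

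\textbf{Step 5: the isometry constant.} Compute
\[
\ip{WK_u}{WK_w}=\psi(u)\overline{\psi(w)}\,\overline{\ip{W^*}{}}
\]
directly, or rather compute $\ip{W^*K_u}{W^*K_w}=\overline{\psi(u)}\psi(w)e^{\ip{\varphi(w)}{\varphi(u)}}$ and expand $\ip{Aw+b}{Au+b}=\ip{w}{u}+\ip{w}{A^*b}+\ip{A^*b}{u}+|b|^2$. The exponential factors of $\psi$ cancel the cross terms, leaving
\[
|\psi(0)|^2e^{|b|^2}\ip{K_u}{K_w}.
\]
So $W^*$, and hence $W$ (because $W^*/r$ is an isometry with dense range, hence unitary), satisfies $\|Wf\|=r\|f\|$ on the dense span of kernels, and therefore everywhere.
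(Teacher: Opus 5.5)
Your argument is correct in substance, but it is organized differently from the paper's. The paper cites Zhao's invertibility theorem to get at once that $\varphi(z)=Az+b$, together with a two-sided bound on $|\psi|$ against the Gaussian weight. It then applies Liouville to $\psi(z)e^{\ip{Az}{b}}$ and proves $\|Wf\|=r\|f\|$ by the change of variables $w=Az+b$ in the Gaussian integral.

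Your route differs in four ways:
\begin{itemize}
\item You cite only the boundedness characterization.
\item You derive the two-sided bound yourself from $\|W^*K_w\|^2=|\psi(w)|^2e^{|\varphi(w)|^2}$ and the fact that $W^*$ is bounded below.
\item You show $A$ is invertible and then unitary by applying the boundedness criterion to $W^{-1}$.
\item You obtain the isometry from the Gram identity $\ip{W^*K_u}{W^*K_w}=r^2\ip{K_u}{K_w}$ and density of kernels.
\end{itemize}
The paper's route is shorter. Yours is more self-contained and actually proves that $A$ is unitary. The paper needs this for $|Az+b|^2-|z|^2=2\,\mathrm{Re}\ip{Az}{b}+|b|^2$ but records only $\|A\|=1$. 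Your kernel computation also avoids integration entirely.

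Three points should be tightened.

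\textbf{First}, ``I expect $W^{-1}$ is again a WCO'' is asserted, not proved, though it is routine. Your kernel lower bound makes $\psi$ zero-free. So once $\varphi$ is an affine bijection, $g\mapsto(g/\psi)\circ\varphi^{-1}$ is a two-sided inverse of $W$ on entire functions, and therefore equals the bounded operator $W^{-1}$. The boundedness criterion then gives $\|A^{-1}\|\le 1$. You could also bypass $W^{-1}$: if $|A\zeta|<|\zeta|=1$, the lower kernel bound makes the harmonic function $t\mapsto\log|\psi(t\zeta)|+\mathrm{Re}(t\ip{A\zeta}{b})$ at least $\tfrac12(1-|A\zeta|^2)|t|^2-C$. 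This contradicts the mean value property at $t=0$.

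\textbf{Second}, invertibility of $A$ comes from injectivity of $W$, not surjectivity. If $\varphi(\C^n)=b+\mathrm{Ran}A$ is proper, choose $v\neq0$ orthogonal to $\mathrm{Ran}A$. Then $f(z)=\ip{z-b}{v}$ is a nonzero polynomial in $\ker W$.

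\textbf{Third}, in this paper $\ip{z}{w}$ is linear in $z$ (the kernel is $e^{\ip{z}{w}}$), so $\ip{w}{A^*b}$ is already holomorphic in $w$. No logarithm $\psi=e^g$ is needed: $\psi(w)e^{\ip{w}{A^*b}}$ is entire with modulus bounded above and below, and Liouville applies directly.

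The abandoned first display in Step 5 should simply be removed.
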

\begin{proof}
    By Theorem 1 of \cite{zhao2015invertible}, it is known that $W=W_{\psi,\varphi}$ is invertible if 
    \begin{align*}
        \varphi(z)=Az+b
    \end{align*}
    for some invertible matrix $A$ with $||A||=1$ and $b\in\mathbb{C}^{n}$, and there are constants $0<m\leq M<\infty$ such that
    \begin{align*}
        m\leq \left|\psi(z)\right|^{2}\exp\left(\frac{\left|\varphi(z)\right|^{2}-|z|^{2}}{2}\right)\leq M.
    \end{align*}
Since $||A||=1$, $|Az + b|^2 - |z|^2 = 2\text{Re}\langle Az, b\rangle
+ |b|^2$, so the entire function
$g(z) := \psi(z)e^{\langle Az, b\rangle}$ satisfies
$m e^{-|b|^2} \le |g(z)|^2 \le M e^{-|b|^2}$. Being bounded and
entire, $g$ is constant by Liouville's theorem; evaluating at $z = 0$
gives $g \equiv \psi(0)$, that is,
$\psi(z) = \psi(0)e^{-\langle Az, b\rangle}
         = \psi(0)e^{-\langle z, A^*b\rangle}$.
         
 Now we prove the norm identity of $W$. Let $w = Az + b$. Since the modulus is
invariant under the unitary $A$ and under translation, it holds that 
$\langle Az, b\rangle = \langle w, b\rangle - |b|^2$ and
$$|z|^2 = |w - b|^2 = |w|^2 - 2\text{Re}\langle w, b\rangle + |b|^2.$$
Therefore,
\[
-2\text{Re}\langle Az, b\rangle - |z|^2
= -2\text{Re}\langle w, b\rangle + 2|b|^2
  - |w|^2 + 2\text{Re}\langle w, b\rangle - |b|^2
= |b|^2 - |w|^2 .
\]
We conclude that
\[
\|Wf\|^2
= |\psi(0)|^2\pi^{-n}\int_{\mathbb{C}^n}
  e^{-2\text{Re}\langle Az, b\rangle}\, |f(Az+b)|^2\, e^{-|z|^2}\, dV(z)
= |\psi(0)|^2 e^{|b|^2} \|f\|^2.
\]
\end{proof}
Similarly to Theorem \ref{no_dual} and Remark~\ref{remark:canonical}, we again have that the only WCOs which produce exact dual frames are the multiplicative ones, and the associated dual frames are the canonical duals of given frames. 
\begin{prop}\label{fock_tight}
Let $\{f_n\}$ be a frame for $\mathcal{F}^2(\C^n)$ with frame operator $S$ and let $W$ be a bounded weighted composition operator on $\mathcal{F}^2(\C^n)$ such that $\{Wf_n\}$ is a dual frame of $\{f_n\}$.  Then $W=rI$ for some $r>0$ and $\{f_n\}$ is tight with bound $\frac{1}{r}$. Furthermore, $W=S^{-1}$, $\{ Wf_n\}$ is the canonical dual of $\{f_{n}\}$, and $\{ Wf_n\}$ is tight.
\end{prop}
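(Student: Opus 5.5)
As in Remark~\ref{remark:canonical}, the plan is to start from the identity $f=\sum_n\ip{f}{f_n}Wf_n=WSf$. This gives $WS=I$, and since $S$ is invertible (Lemma~\ref{lemma:frame_operator}), $W=S^{-1}$. Consequently $W$ is bounded, invertible, positive and self-adjoint. Lemma~\ref{wco_fock_lem} then applies. It gives $\varphi(z)=Az+b$ with $A$ unitary, and $\psi(z)=\psi(0)e^{-\ip{z}{A^*b}}$. It also shows $W=rU$ with $U$ unitary and $r=|\psi(0)|e^{|b|^2/2}>0$, since $\norm{Wf}=r\norm{f}$ and $W$ is surjective.

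Next I would use self-adjointness. From $W=W^*$ and $W=rU$ we get $W^2=WW^*=r^2UU^*=r^2I$. Since $W\ge 0$ and positive square roots are unique, $W=rI$. This step avoids the coefficient-matching argument of Theorem~\ref{no_dual} entirely, because Lemma~\ref{wco_fock_lem} already supplies the isometry structure. As a consistency check, apply $W$ to $f\equiv1$ to get $\psi\equiv r$. Applying it to the coordinate functions $z_j$ gives $r\varphi_j(z)=rz_j$, so $\varphi(z)=z$, which is compatible with $b=0$ and $A=I$.

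Finally, $S=W^{-1}=\frac1r I$, so $\sum_n|\ip{f}{f_n}|^2=\ip{Sf}{f}=\frac1r\norm{f}^2$ and $\{f_n\}$ is tight with bound $\frac1r$. The dual is $\{Wf_n\}=\{S^{-1}f_n\}$, which is the canonical dual. Its frame operator is $\sum_n\ip{\cdot}{rf_n}rf_n=r^2S=rI$, so it is tight with bound $r$.

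The only potentially delicate point is confirming that Lemma~\ref{wco_fock_lem} gives genuine unitarity of $U=W/r$ and not just an isometry. Surjectivity follows from the invertibility of $W=S^{-1}$, so this is immediate. Everything else is routine.
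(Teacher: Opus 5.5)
Your argument is correct and is essentially the paper's proof. The paper applies the scaled-isometry identity $\|Wg\|=r\|g\|$ from Lemma~\ref{wco_fock_lem} to $g=Sf$ and uses self-adjointness of $S$ to get $S^{2}=r^{-2}I$, while you run the same reasoning on $W$ itself, using $W=W^{*}\ge 0$ to get $W^{2}=r^{2}I$; both then conclude by uniqueness of positive square roots. (Your concern about unitarity is unnecessary: $\|Wf\|=r\|f\|$ already gives $W^{*}W=r^{2}I$ by polarization, and $W^{*}W=W^{2}$ because $W$ is self-adjoint.)
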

\begin{proof}
Similar to Theorem~\ref{no_dual}, we can easily obtain $WS=I$; since $S$ is invertible
(Lemma \ref{lemma:frame_operator}), $W$ is invertible and $S=W^{-1}$.  By Lemma
\ref{wco_fock_lem} there is $r>0$ with $\|Wg\|=r\|g\|$ for every
$g\in \mathcal{F}^{2}(\C^{n})$.  Applying this to $g=Sf$ gives
$\|f\|=\|WSf\|=r\|Sf\|$, so $\|Sf\|=r^{-1}\|f\|$ for every $f$.  Since $S$ is
self-adjoint,
\[
  \ip{S^{2}f}{f}=\ip{Sf}{Sf}=\|Sf\|^{2}=r^{-2}\|f\|^{2}
  \qquad\text{for all }f\in \mathcal{F}^{2}(\C^{n}),
\]
so the self-adjoint operator $S^{2}-r^{-2}I$ has identically vanishing quadratic
form and therefore is trivial.  Thus $S^{2}=r^{-2}I$; as $S$ is positive and the
positive square root of $r^{-2}I$ is unique, $S=r^{-1}I$.  Hence
$W=S^{-1}=rI$, and
$\sum_n|\ip{f}{f_n}|^{2}=\ip{Sf}{f}=r^{-1}\|f\|^{2}$, so $\{f_n\}$ is tight with
frame bound $1/r$. Furthermore, $\{ Wf_n\}$ is the canonical dual frame of $\{f_{n}\}$, and thus $\{ Wf_n\}$ is also tight.
\end{proof}
Relaxing exact duality changes the picture completely in $H^2(\D)$, where Proposition~\ref{thm:hardy_approximate_dual} permits arbitrarily large frame upper bound, but not in $\mathcal{F}^2(\C^n)$. By Proposition~\ref{fock_main} the only WCO available is the scalar factor, so the question becomes how well a single scalar $c$ can approximate $S^{-1}$; taking $c=2/(A+B)$ gives
error $(B-A)/(A+B)$, and Proposition~\ref{fock_main} shows that this estimate is sharp.
\begin{prop}\label{fock_main}
Let $\{f_n\}$ be a frame for $\mathcal{F}^2(\mathbb{C}^n)$ with optimal frame
bounds $0 < A \le B < \infty$, and let $0 \le \varepsilon < 1$. Then
there exists a bounded weighted composition operator $W$ on
$\mathcal{F}^2(\mathbb{C}^n)$ such that $\{Wf_n\}$ is an
$\varepsilon$-approximate dual frame to $\{f_n\}$ if and only if
\[
\frac{B - A}{A + B} \le \varepsilon,
\qquad\text{equivalently,}\qquad
\frac{B}{A} \le \frac{1 + \varepsilon}{1 - \varepsilon}.
\]
\end{prop}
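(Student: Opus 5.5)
For sufficiency, I will take $W=cI$ with $c=2/(A+B)$. This is the weighted composition operator with $\psi\equiv c$ and $\varphi(z)=z$, and it is clearly bounded. By Lemma~\ref{lemma:frame_operator}, $S$ is self-adjoint with $\sigma(S)\subset[A,B]$. Since $A,B$ are optimal, both endpoints lie in $\sigma(S)$. Then $I-cS$ is self-adjoint with spectrum contained in $[1-cB,\,1-cA]=\bigl[-\tfrac{B-A}{A+B},\tfrac{B-A}{A+B}\bigr]$. Hence $\norm{I-WS}=\tfrac{B-A}{A+B}\le\varepsilon$, and the observation before Lemma~\ref{spec_lem} (valid verbatim in $\mathcal{F}^2(\C^n)$) gives the approximate duality. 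The stated equivalence $\tfrac{B-A}{A+B}\le\varepsilon\iff \tfrac{B}{A}\le\tfrac{1+\varepsilon}{1-\varepsilon}$ is elementary algebra.

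For necessity, suppose $\norm{I-WS}\le\varepsilon<1$. As in Lemma~\ref{spec_lem}, $WS$ is invertible, so $W=(WS)S^{-1}$ is invertible. Lemma~\ref{wco_fock_lem} then gives $W=rU$ with $r>0$ and $U$ unitary. The key step is a lower bound for $\norm{I-rUS}$ that does not depend on $U$. I pick unit vectors $x,y$ with $\norm{Sx-Ax}$ and $\norm{Sy-By}$ arbitrarily small; these exist because $A,B\in\sigma(S)$ and $S$ is self-adjoint. Then
\[
\norm{(I-rUS)x}\ge \norm{rUSx}-\norm{x}\approx rA-1,\qquad \norm{(I-rUS)x}\ge 1-rA,
\]
and similarly at $y$. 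This yields $\varepsilon\ge\max(|1-rA|,|1-rB|)$. The function $r\mapsto\max(|1-rA|,|1-rB|)$ is minimized at $r=2/(A+B)$, with minimum value $\tfrac{B-A}{A+B}$. Hence $\tfrac{B-A}{A+B}\le\varepsilon$.

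The main obstacle is exactly this step: the unitary factor $U$ could a priori rotate $S$ favourably, so the bound must come from norm (reverse triangle) inequalities that ignore $U$. The inequality $\norm{(I-T)x}\ge\bigl|\,\norm{Tx}-\norm{x}\,\bigr|$ does precisely that, using $\norm{rUSx}=r\norm{Sx}$. A secondary point to check is that Lemma~\ref{wco_fock_lem} really yields $\norm{Wf}=r\norm{f}$ for every invertible bounded WCO, which it states. The approximate eigenvectors also need some care: since $\norm{USx}=\norm{Sx}\to A$, the argument passes to the limit cleanly.
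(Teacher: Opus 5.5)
Your proposal is correct and follows essentially the same route as the paper: $W=cI$ with $c=2/(A+B)$ for sufficiency, and for necessity the invertibility of $W$, the scalar-isometry property from Lemma~\ref{wco_fock_lem}, and the reverse triangle inequality evaluated along approximate eigenvectors of $S$ at $A$ and $B$. The only cosmetic difference is at the end: you minimize $\max(|1-rA|,|1-rB|)$ over $r$, whereas the paper divides $rA\ge 1-\varepsilon$ by $rB\le 1+\varepsilon$.
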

\begin{proof}
The two inequalities are equivalent since
$B(1-\varepsilon) \le A(1+\varepsilon)$ if and only if
$B - A \le \varepsilon(A + B)$.

($\Leftarrow$) Let $c = \tfrac{2}{A+B}$ and take $W = cI$, a weighted
composition operator with constant weight $\psi \equiv c$ and
$\varphi = \mathrm{Id}$, where $\mathrm{Id}$ is the identity map for $\mathbb{C}^n$. Let $S$ be the frame operator of $\{f_n\}$.  Since $S$ is self-adjoint with
$\sigma(S) \subseteq [A, B]$, it holds that
\[
\|I - WS\| = \|I - cS\|
= \max_{t \in \sigma(S)} |1 - ct|
\le \max_{t \in [A,B]} |1 - ct|
= \frac{B-A}{A+B} \le \varepsilon.
\]
Note
that $\{Wf_n\} = \{cf_n\}$ is a frame trivially. Therefore, $\{Wf_n\}$ is an $\varepsilon$-approximate dual frame to $\{f_n\}$.

($\Rightarrow$) Suppose $\|I - WS\| \le \varepsilon < 1$. Then $WS$
is invertible by a Neumann series argument, and since $S$ is
invertible, so is $W$. By Lemma \ref{wco_fock_lem} there is
$r > 0$ with $\|Wg\| = r\|g\|$ for all $g \in \mathcal{F}^2(\mathbb{C}^n)$.
Then for every unit vector $f$,
\[
\bigl|\,1 - r\|Sf\|\,\bigr| = \bigl|\,\|f\| - \|WSf\|\,\bigr|
   \le \|(I-WS)f\| \le \varepsilon,
\]
and
\begin{equation}\label{eq:4.3ineq}
1 - \varepsilon \le r\|Sf\| \le 1 + \varepsilon,
\qquad \|f\| = 1.
\end{equation}
Since the frame bounds are optimal and $S$ is positive and self-adjoint,
$\|Sf\|^2 = \langle S^2 f, f\rangle$ which indicates that
\[
\inf_{\|f\|=1} \|Sf\| = A
\qquad\text{and}\qquad
\sup_{\|f\|=1} \|Sf\| = B
\]
both attained in the limit along the approximate eigenvectors of $S$ at
$A$ and $B$ respectively. Taking such approximate eigenvectors in
\eqref{eq:4.3ineq} yields
\[
rA \ge 1 - \varepsilon
\qquad\text{and}\qquad
rB \le 1 + \varepsilon,
\]
and dividing gives $B/A \le (1+\varepsilon)/(1-\varepsilon)$.
\end{proof}
\begin{remark}
Proposition \ref{fock_main} recovers Proposition \ref{fock_tight} as the case $A = B$, i.e.\ tightness. Notice the sharp contrast to Proposition~\ref{thm:hardy_approximate_dual}: on $H^2(\mathbb{D})$,
weighted composition operators produce $\varepsilon$-approximate
duals of frames with arbitrarily large condition number $B/A$,
whereas on $\mathcal{F}^2(\mathbb{C}^n)$, the approximate dual via a weighted
composition operator is possible precisely for approximately tight
frames, with the sharp bounds satisfying
$B/A \le (1+\varepsilon)/(1-\varepsilon)$. 
\end{remark}
\section*{Acknowledgment}
The authors would like to thank Professor Ruhan Zhao for helpful discussions. 

\section*{Competing Interests}
The authors declare no competing interests.
\bibliographystyle{plain}
\bibliography{refs}
\end{document}